\newtheorem{theorem}{Theorem}[section]
\theoremstyle{plain}
\newtheorem{definition}[theorem]{Definition}
\newtheorem{proposition}[theorem]{Proposition}
\numberwithin{equation}{section}
\begin{document}
\title{Long-time asymptotic solution structure of Camassa-Holm equation
subject to an initial condition with non-zero reflection coefficient of the
scattering data.}
\author{Chueh-Hsin Chang$^{1}$, Ching-Hao Yu$^{2}$ and Tony Wen-Hann Sheu$^{1,3,4}$\\
%EndAName
{\small $^1$ \textit{Center of Advanced Study in Theoretical
Sciences (CASTS),} }\\
{\small \textit{National Taiwan University, Taipei, Taiwan} }\\
{\small $^2$ \textit{Department of Ocean Science and Engineering,} }\\
{\small \textit{Zhejiang University, Zhejiang, People's Republic of China.} }\\
{\small $^3$ \textit{Department of Engineering Science and Ocean
Engineering,} }\\
{\small \textit{National Taiwan University, Taipei, Taiwan} }\\
{\small $^4$ \textit{Institute of Applied Mathematical Science,} }\\
{\small \textit{National Taiwan University, Taipei, Taiwan} }\\
{\small $^*$ Corresponding author, twhsheu@ntu.edu.tw }}
\date{}
\maketitle

\begin{abstract}
In this article we numerically revisit the long-time solution behavior of
the Camassa-Holm equation $%
u_{t}-u_{xxt}+2u_{x}+3uu_{x}=2u_{x}u_{xx}+uu_{xxx}.$ The finite difference
solution of this integrable equation is sought subject to the newly derived
initial condition with Delta-function potential. Our underlying strategy of
deriving a numerical phase accurate finite difference scheme in time domain
is to reduce the numerical dispersion error through minimization of the
derived discrepancy between the numerical and exact modified wavenumbers.
Additionally, to achieve the goal of conserving Hamiltonians in the
completely integrable equation of current interest, a
symplecticity-preserving time-stepping scheme is developed. Based on the
solutions computed from the temporally symplecticity-preserving and the
spatially wavenumber-preserving scheme, the long-time asymptotic CH solution
characters can be accurately depicted in distinct regions of the space-time
domain featuring with their own quantitatively very different solution
behaviors. We also aim to numerically confirm that in the two transition
zones their long-time asymptotics can indeed be described in terms of the
theoretically derived Painlev\'{e} transcendents. Another attempt of this
study is to numerically exhibit a close connection between the presently
predicted finite-difference solution and the solution of the Painlev\'{e}
ordinary differential equation of type II in two different transition
zones.\bigskip

\noindent \textit{2000 Mathematics Subject Classification.}\newline
\noindent {\ }\textit{Key words and phrases. }Camassa-Holm equation;
Delta-function; initial scattering data; symplecticity-preserving;
wavenumber-preserving; long-time asymptotics; Painlev\'{e} transcendent.
\end{abstract}

% Ansatze is plural

%\vspace{5mm}

\section{Introduction}

The well known Camassa-Holm (CH) equation given below has been derived from
the simplified Navier-Stokes equation to model the motion of an inviscid
fluid flow under the influence of gravity.%
\begin{equation}
u_{t}-u_{xxt}+2u_{x}+3uu_{x}=2u_{x}u_{xx}+uu_{xxx}.  \label{eq:CH}
\end{equation}%
The solution of this shallow water equation is normally sought subject to a
fast decaying initial solution given as%
\begin{equation}
u(x,t=0)=f(x),  \label{eq:bc1_}
\end{equation}%
and%
\begin{equation}
f(x)\rightarrow 0\text{ as }|x|\rightarrow \infty .  \label{eq:bc2}
\end{equation}

Camassa-Holm equation possesses many rich mathematical properties and, as a
result, has been intensively studied since its derivation in \cite{CH1993}.
Later on, authors in \cite{ConstantinLannes2009}, \cite{Ionescu-Krus} and
\cite{Johnson2002} derived CH equation by different approaches. This shallow
water equation can be expressed in terms of different Hamiltonian functions
and permits an infinite number of conservation laws. The completely
integrable equation (\ref{eq:CH}) has multi-symplectic structure as well.
One needs therefore to numerically retain these striking structure
preserving properties while solving the CH equation in time domain by finite
difference method.

Aside from the necessity of developing a symplecticity-preserving temporal
scheme to conserve Hamiltonians in CH equation, it is also essential to
develop a dispersion error reducing CH scheme to approximate spatial
derivatives. Through the minimization of discrepancy between the exact and
numerical modified wavenumbers, we can apply the physically correct and
numerically accurate scheme to retain the distinguished nature of
Camassa-Holm solution obtained even at a large time. Let $w=u-u_{xx}+1$
denotes the momentum. Boutet de Monvel et al. \cite{MKST}, \cite%
{bib:Monvel(2010)} theoretically showed that for an initial condition $%
u\left( x,0\right) $ satisfying (\ref{eq:bc1_}), (\ref{eq:bc2}) and%
\begin{equation}
w\left( x,0\right) >0,\text{ \ }\forall x\in
%TCIMACRO{\U{211d} }%
%BeginExpansion
\mathbb{R}
%EndExpansion
,  \label{eq:bc2copy}
\end{equation}%
\begin{equation}
w\left( x,0\right) \in \left \{ \left. v\in H^{3}\left(
%TCIMACRO{\U{211d} }%
%BeginExpansion
\mathbb{R}
%EndExpansion
\right) \right \vert \int_{%
%TCIMACRO{\U{211d} }%
%BeginExpansion
\mathbb{R}
%EndExpansion
}\left( 1+\left \vert x\right \vert \right) ^{1+l}\left( \left \vert v\left(
x\right) -\kappa \right \vert +\left \vert v^{\prime }\left( x\right) \right
\vert +\left \vert v^{\prime \prime }\left( x\right) \right \vert \right)
dx<\infty \right \} \text{,}  \label{restriction2}
\end{equation}%
the asymptotic CH solution in the whole domain can be classified into a
soliton region, two oscillatory regions, one fast decay region, and two
transition regions expressed in terms of the scattering data corresponding
to the initial condition by the Riemann-Hilbert approaches. From their
important works, we are aware of the fact that two transition regions can be
modeled by the second type of Painlev\'{e} transcendents. Painlev\'{e}
equations have been regarded as the most important nonlinear ordinary
differential equations since these equations have close relevance to many
fields of physical significance \cite{bib:Iwasaki(1991)}. We are therefore
motivated to explore in more detail the connection between the solutions of
Painlev\'{e} equation (ordinary differential equation) and the CH equation
(partial differential equation) in the transition regions of $\left(
x,t\right) .$ Besides the asymptotic solutions, the decay rates of the error
between the theoretical and asymptotic solutions can be also obtained.
However, we still need to answer the following two naturally arisen
questions:\smallskip

(Q1) How long it takes for us to get a good approximation between the
long-time asymptotic solution and the analytical solution?\smallskip

(Q2) How the difference between the solutions of CH equation and its
long-time asymptotics is decayed with time?\medskip

In order to compare the numerical solutions with the asymptotic solutions,
we need to know the explicit form of the scattering data corresponding to
the initial condition. Then the asymptotic solutions can be computed
numerically. How to choose a suitable initial condition with a known
scattering data is therefore the important starting point for (Q1) and (Q2).

In the literature all the scattering data corresponding to the isospectral
problems of the CH equation were known a priori (see, for example, \cite%
{Constantin2001,JohnsonPRSLA2002,Lenells2002,LiZhang}). This means that the
number of discrete eigenvalues is assumed to be known and the reflection
coefficient has been written in an abstract form. Based on the direct and
inverse scattering theories (see, for example, \cite{DrazinJohnson}, \cite%
{Constantin2001}), we can construct an initial solution which corresponds to
the Schr\"{o}dinger operator of the Lax pair with delta function-like
potential (see Section \ref{sec: initial condition} and \cite{changsheu}).
Then the reflection coefficient of the scattering data is non-zero and only
one discrete eigenvalue exists \cite{DrazinJohnson}, \cite{Whitham}. The
detailed information about the physical properties of the scattering data
can be seen in \cite{changsheu}. Under the specified initial condition $%
u\left( x,0\right) $, the asymptotic solutions can be expressed in terms of
the parameters appeared in $u\left( x,0\right) $ (see Appendix A).

For addressing the first long-time asymptotics issue (Q1), in this study the
distinct regions (soliton region and decaying modulated region) with
qualitatively different solution natures are numerically explored in great
detail. In the zones between the soliton, two slowly decaying modulated
oscillation regions, and the fast decaying region, their solution behaviors
shall be carefully examined as well. In the two transition zones, the FDTD
solution obtained from the partial differential equation will be compared
with the solution obtained from the ordinary differential equation known as
the Painlev\'{e} equation of type II. In the light of the previous results,
we have known that after a sufficiently long wave propagation time $T$, the
numerical and asymptotic solutions become close to each other for $t\geq T$.
However, what do we mean by this long time $T$ remains quantitatively
unclear. We attempt to answer (Q1) by accurately finding the time $T$ such
that the $L^{2}$ norm difference between the numerical and asymptotic
solutions in each region becomes small enough and no significant change will
happen for $t\geq T.$ The profiles of the numerical and asymptotic solutions
in the qualitatively different six regions in space-time domain at different
times are also plotted. The two solutions indeed match very well as time
becomes large enough.

The question (Q2) will be answered from two different points of view. First,
the rate of convergence for the difference between the numerical and
asymptotic solutions will be computed. Secondly, the order of the time-decay
estimate from \cite{MKST} will be found. The exact value of powers of $t$
among the time-decay estimates remains unknown in \cite{MKST}. Let $T^{\ast
} $ be the maximum of the times found in each region in (Q1). We will use
the $\sup $ norm difference between the numerical solution and asymptotic
solution to compute the order of the time-decay estimate at $t=T^{\ast }$ in
each region. We will see that these orders obtained numerically are indeed
in the range indicated in \cite{MKST}. However, these results depend on how
large the $\left( x,t>0\right) $ plane is for the approximation between the
analytical and asymptotic solutions. If we intend to get a precise
approximation in a larger region, the time needed to reach the asymptotics
shall be longer accordingly. The details can be seen in Subsection \ref%
{(Q1)(Q2)}.

Among the numerical approaches employed to study the long-time asymptotics
of integrable systems, Boutet de Monvel et al. \cite{BdMKSZheng2010} have
studied the initial-boundary value problems of the nonlinear Schr\"{o}dinger
equation. Our point of view to be conveyed, to the authors' knowledge, seems
to be new in the sense that in the literature no similar result has been
reported before. The numerical and asymptotic solutions in each region are
compared. To achieve this goal, a specified initial condition is needed and
the corresponding scattering data must be known explicitly. On this basis,
the long-time asymptotics must be re-expressed according to the specified
initial condition.\smallskip

The rest of this article is organized as follows. In Section \ref%
{sec:equation}, some of the rich properties in the CH equation which are
useful to derive the initial scattering data in Section \ref{sec: initial
condition} and which are necessarily to be taken into account to develop a
mathematically rigorous numerical scheme in Section \ref{sec:Scheme} are
given. For conducting a long-time asymptotic analysis on the CH equation, a
new smooth initial data is derived in Section \ref{sec: initial condition}.
In this study we avoid approximating the space-time mixed derivative terms
and the third-order dispersive term by adopting the two-step solution
algorithm described in Section 4.1. In the transformed equations, the
first-order spatial derivative term is approximated by the sixth-order
accurate dispersion-error reducing scheme developed in Section 4.2.1. The
temporal derivative term is approximated by the sixth-order accurate
symplecticity preserving scheme in Section 4.2.2. In Section 5, the
numerically predicted long-time asymptotic solutions in different wave
regions are discussed in detail. Comparison of the finite difference
solution with the asymptotic solution derived from the underlying theory in
\cite{bib:Monvel(2010)} and \cite{MKST} is made. We also address the subtle
change of the solutions within the transitional regions in the result
section. The newly defined powers of $t$ for the time-decay estimates
between asymptotic and FDTD solutions in some regions are computed in this
section. Finally, some concluding remarks are drawn in Section 6 according
to the results computed from the currently developed symplecticity and
modified wavenumber preserving scheme.

\section{Some mathematical properties of the CH equation \label{sec:equation}%
}

When investigating the long-time asymptotics of the integrable CH equation,
one needs to get its corresponding Lax pair equations. Equation (\ref{eq:CH}%
) can be mathematically expressed as the compatibility condition between the
Lax pair equations consisting of a system of linear eigenvalue equations
given by%
\begin{equation}
\frac{1}{w}\left( -\psi _{xx}+\frac{1}{4}\psi \right) =\lambda \psi ,
\label{Lax1}
\end{equation}%
\begin{equation}
\psi _{t}=-\left( \frac{1}{2\lambda }+u\right) \psi _{x}+\frac{1}{2}%
u_{x}\psi .  \label{Lax2}
\end{equation}%
In this set of Lax pair equations, $w(=u-u_{xx}+1)$ and $\lambda $ denote
the momentum and eigenvalue, respectively. Steady Lax equation (\ref{Lax1})
formulates the spectral problem for the CH equation (\ref{eq:CH}) defined in
the space-time coordinate. This problem can be solved via the inverse
scattering method. In the following some useful properties for the
derivation of initial data are briefly summarized.

Note that $w\left( x,t\right) >0$ provided that equation (\ref{eq:bc2copy})
holds \cite{Constantin2001}. From \cite{MKST} and \cite{Constantin2001}, by
virtue of the Liouville transformation given below%
\begin{equation}
\tilde{\psi}\left( y\right) =\left( w\left( x,t\right) \right) ^{\frac{1}{4}%
}\psi \left( x\right) ,  \label{eqn5}
\end{equation}%
\begin{equation}
y=x-\int_{x}^{\infty }\left( \sqrt{w\left( r,t\right) }-1\right) dr,
\label{eqn6}
\end{equation}%
equation (\ref{Lax1}) can be reformulated as a eigenvalue problem of the Schr%
\"{o}dinger equation type given below%
\begin{equation}
L\tilde{\psi}:=-\tilde{\psi}_{yy}+q\left( y,t\right) \tilde{\psi}=k^{2}%
\tilde{\psi}\text{.}  \label{Lax1schrodinger}
\end{equation}%
In the above,%
\begin{equation}
\lambda =\frac{1}{4}+k^{2},  \label{eqn7}
\end{equation}%
and%
\begin{equation}
q\left( y,t\right) =\frac{w_{yy}\left( y,t\right) }{4w\left( y,t\right) }-%
\frac{3}{16}\frac{\left( w_{y}\right) ^{2}\left( y,t\right) }{w^{2}\left(
y,t\right) }+\frac{1-w\left( y,t\right) }{4w\left( y,t\right) }  \label{eqn9}
\end{equation}%
with $w\left( y,t\right) =w\left( x\left( y\right) ,t\right) .$ Given an
initial solution $u\left( x,0\right) $, the eigenvalues (discrete and
continuous spectra) and the corresponding eigenfunction of the CH equation
can be obtained by solving the solution $\tilde{\psi}\left( y,k,t\right) $
at $t=0$ from (\ref{Lax1schrodinger}):%
\begin{equation*}
-\tilde{\psi}_{yy}\left( y,k,0\right) +q\left( y,0\right) \tilde{\psi}\left(
y,k,0\right) =k^{2}\tilde{\psi}\left( y,k,0\right) \text{.}
\end{equation*}%
According to the derivation detailed in, e.g., \cite{DeiftT} or \cite%
{FaddeevS}, under the integrable condition%
\begin{equation*}
\int_{%
%TCIMACRO{\U{211d} }%
%BeginExpansion
\mathbb{R}
%EndExpansion
}\left( 1+\left \vert y\right \vert \right) ^{2}q\left( y,0\right) dy<\infty
,
\end{equation*}%
for the potential $q\left( y,0\right) $, there exists a finite number of
discrete spectra, say $k=i\mu _{1},\cdots ,i\mu _{N}$ for some $N\in
%TCIMACRO{\U{2115} }%
%BeginExpansion
\mathbb{N}
%EndExpansion
$. For each $i\mu _{j}$ with $j=1,...,N,$ its corresponding eigenfunction is
$\tilde{\psi}_{j}\left( y\right) =\tilde{\psi}\left( y,i\mu _{j},0\right) .$
Let the normalization constant be $\gamma _{j}$ appearing in the following
asymptotic expression of $\tilde{\psi}_{j}:$%
\begin{equation*}
\tilde{\psi}_{j}\left( y\right) =\gamma _{j}e^{-\mu _{j}y}+o\left( 1\right)
\text{ as }y\rightarrow \infty .
\end{equation*}%
Each $k\in
%TCIMACRO{\U{211d} }%
%BeginExpansion
\mathbb{R}
%EndExpansion
$ corresponds to a continuous spectrum. The respective eigenfunction $\hat{%
\psi}=\hat{\psi}\left( y,k\right) $ satisfies%
\begin{equation*}
\hat{\psi}\sim \left \{
\begin{tabular}{l}
$e^{-iky}+\tilde{R}\left( k\right) e^{iky}$; $\text{as }y\rightarrow \infty
,\smallskip $ \\
$\tilde{T}\left( k\right) e^{-iky}$; $\text{as }y\rightarrow -\infty .$%
\end{tabular}%
\  \  \  \right.
\end{equation*}%
where $\tilde{T}\left( k\right) $ and $\tilde{R}\left( k\right) $ are called
as the transmission and reflection coefficients, respectively. Note that $%
\left \vert \tilde{T}\left( k\right) \right \vert ^{2}+\left \vert \tilde{R}%
\left( k\right) \right \vert ^{2}=1.$

Given an initial condition $u\left( x,0\right) $, through the Liouville
transform given in (\ref{eqn5})-(\ref{eqn6}), the corresponding potential $%
q\left( y,0\right) $ and the associated scattering data $\left \{ \tilde{R}%
\left( k\right) ,\mu _{j},\gamma _{j}\right \} $ can be derived. Hence, a
map between $u\left( x,0\right) $ and the set of scattering data, namely, $%
\left \{ \tilde{R}\left( k\right) ,\mu _{j},\gamma _{j}\right \} $ exists in
the so-called direct scattering problem%
\begin{equation*}
u\left( x,0\right) \rightarrow \left \{ \tilde{R}\left( k\right) ,\mu
_{j},\gamma _{j}\right \} \text{ }(j=1,...,N).
\end{equation*}%
Analysis of the inverse scattering problem can answer whether the map
defined above can be inverted or not. The detail can be seen in \cite{AKNS},
\cite{MKST}, \cite{Constantin2001}, \cite{DeiftT} and \cite{DrazinJohnson}.
Let $T\left( k\right) $ and $R\left( k\right) $ be the respective
transmission and reflection coefficients for the original eigenvalue problem
(\ref{Lax1}), from \cite{MKST} it follows that $%
R(k,t)=R(k)e^{-ikt/(1/4+k^{2})},$ $\gamma _{j}\left( t\right) =\gamma
_{j}e^{\mu _{j}t/2(1/4-\mu _{j}^{2})},$ for $t>0.$ The relation between $%
T\left( k\right) ,R\left( k\right) $ and $\tilde{T}\left( k\right) ,\tilde{R}%
\left( k\right) $ can then be derived as%
\begin{equation*}
T\left( k\right) =\tilde{T}\left( k\right) e^{ikH_{-1}\left( w\right) },%
\text{ }R\left( k\right) =\tilde{R}\left( k\right) ,
\end{equation*}%
where $H_{-1}\left( w\right) \equiv \int_{%
%TCIMACRO{\U{211d} }%
%BeginExpansion
\mathbb{R}
%EndExpansion
}\left( \sqrt{w\left( r,t\right) }-1\right) dr$ (see \cite{MKST}). Note that
$H_{-1}\left( w\right) $ is independent of $t$ (see, for example, \cite%
{ConstantinIvanov2006}). In summary, application of the inverse scattering
approach to the initial boundary value problem for the equations (\ref{eq:CH}%
-\ref{eq:bc2copy}) involves performing a spectral analysis on the two
eigenvalue equations accounting respectively for the equations of Lax pair.

\section{Initial scattering data\label{sec: initial condition}}

A special initial data for (\ref{eq:CH}) will be derived in this section to
give us an explicit expression of the scattering data.

\begin{theorem}
\label{example}Let $q_{0}\in \left( 0,1\right) $ be a given constant and $A=%
\frac{q_{0}}{1-q_{0}}$. The solution of the Camassa-Holm equation (\ref%
{eq:CH}) is sough subject to the following initial condition%
\begin{equation}
u\left( x,0\right) =\left \{
\begin{array}{ll}
\frac{A\left( A+1+\log \left( e^{x}-A\right) \right) }{e^{x}},\medskip &
\text{for }x\geq \log \left( 1+A\right) , \\
\frac{A\left( A+1+\log \left( \left( 1+A\right) ^{2}e^{-x}-A\right) \right)
}{\left( 1+A\right) ^{2}e^{-x}}, & \text{for }x<\log \left( 1+A\right) .%
\end{array}%
\right.  \label{ini15}
\end{equation}%
For the initial solution defined above, the corresponding scattering data in
spectral domain can be derived as%
\begin{equation}
R\left( k\right) =\frac{-q_{0}}{q_{0}+2ik},\text{ }\mu _{1}=\frac{q_{0}}{2},%
\text{ }\gamma _{1}=\sqrt{\frac{q_{0}}{2}}.  \label{scatteringdata}
\end{equation}
\end{theorem}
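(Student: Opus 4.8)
Rather than computing the scattering data directly from $u(x,0)$ --- which would force us to integrate the nonlinear Liouville relation forward --- the plan is to run the construction in reverse. First I would recognize the three quantities in (\ref{scatteringdata}) as the scattering data of the single attractive Dirac potential $q(y,0)=-q_{0}\,\delta (y)$ for the Schr\"{o}dinger operator $L$ of (\ref{Lax1schrodinger}); then reconstruct $w(x,0)$, and hence $u(x,0)$, by inverting the Liouville change of variables (\ref{eqn5})--(\ref{eqn6}); finally check that the result is exactly (\ref{ini15}).

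\emph{Direct scattering for the $\delta$-potential.} Solving $-\tilde{\psi}_{yy}+q\,\tilde{\psi}=k^{2}\tilde{\psi}$ with $\tilde{\psi}=e^{-iky}+\tilde{R}(k)e^{iky}$ for $y>0$ and $\tilde{\psi}=\tilde{T}(k)e^{-iky}$ for $y<0$, and imposing continuity at $y=0$ together with the derivative jump $[\tilde{\psi}_{y}]_{0^{-}}^{0^{+}}=-q_{0}\,\tilde{\psi}(0)$ produced by the $\delta$, one finds $\tilde{R}(k)=-q_{0}/(q_{0}+2ik)$ and $\tilde{T}(k)=2ik/(q_{0}+2ik)$. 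The only pole of $\tilde{T}$ in the open upper half $k$-plane is at $k=iq_{0}/2$, so there is exactly one discrete eigenvalue, with $\mu_{1}=q_{0}/2$; the corresponding $L^{2}$-normalized bound state is $\tilde{\psi}_{1}(y)=\sqrt{q_{0}/2}\,e^{-\frac{q_{0}}{2}|y|}$, whose tail as $y\to +\infty$ is $\sqrt{q_{0}/2}\,e^{-\mu_{1}y}$, so $\gamma_{1}=\sqrt{q_{0}/2}$. Since $R(k)=\tilde{R}(k)$ and the discrete data are unaffected by the Liouville map (Section \ref{sec:equation}), these are the data for the original spectral problem (\ref{Lax1}).

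\emph{Reconstructing $w$ and $u$.} Away from $y=0$ the relation (\ref{eqn9}) with $q\equiv 0$ becomes, under the substitution $f=w^{1/4}$, the autonomous equation $4f_{yy}=f-f^{-3}$; multiplying by $f_{y}$, integrating, and using $w\to 1$, $w_{y}\to 0$ as $|y|\to\infty$ to fix the constant of integration gives $f_{y}^{2}=\frac{1}{4}(f-f^{-1})^{2}$, and one more integration yields $w(y,0)=(1+Ae^{-|y|})^{2}$ for some $A>0$. The value of $A$ is pinned down by matching singular parts: the corner of $w$ at $y=0$ makes $w_{yy}$ carry $[w_{y}]_{0}\,\delta(y)$ with $[w_{y}]_{0}=-4A(1+A)$, so $q$ carries $\tfrac{[w_{y}]_{0}}{4w(0)}\delta(y)=-\tfrac{A}{1+A}\,\delta(y)$; equating with $-q_{0}\delta(y)$ gives $A=q_{0}/(1-q_{0})$, precisely the constant in the statement. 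I would then invert the Liouville transformation: differentiating (\ref{eqn6}) gives $\frac{dx}{dy}=w^{-1/2}$, which integrates to elementary logarithms on $y>0$ and on $y<0$, the two integration constants being fixed by continuity of $x(y)$ at $y=0$ and by $x-y\to 0$ as $x\to +\infty$. This produces $e^{y}=e^{x}-A$ for $x\ge \log(1+A)$ and $e^{-y}=(1+A)^{2}e^{-x}-A$ for $x<\log(1+A)$, whence $w(x,0)=e^{2x}/(e^{x}-A)^{2}$ and $w(x,0)=(1+A)^{4}e^{-2x}/((1+A)^{2}e^{-x}-A)^{2}$ on the two intervals. Finally $u(x,0)$ is the unique solution of $(1-\partial_{x}^{2})u=w-1$ decaying at $\pm\infty$; rather than evaluate the Green's-function integral, the cleanest closing step is to differentiate the two branches of (\ref{ini15}) and verify directly that $u-u_{xx}+1$ coincides with the reconstructed $w$ (which also confirms the decay of (\ref{ini15})).

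\emph{Where the work is.} The step needing the most care is the legitimacy of running the construction backwards --- the inverse-scattering uniqueness that $-q_{0}\delta(y)$ is the only potential, in the appropriate decay class, producing the data (\ref{scatteringdata}) --- together with the branch and constant bookkeeping in the Liouville inversion that sends the seam $y=0$ to $x=\log(1+A)$. A further point to address is regularity at that seam: $w(\cdot,0)$ is merely Lipschitz there (its second $x$-derivative contains a $\delta$), so $w(\cdot,0)\notin H^{3}$ and the datum sits on the boundary of the class (\ref{restriction2}); this should be flagged and understood as a regularized or limiting $\delta$-potential. Even so, $u(\cdot,0)=(1-\partial_{x}^{2})^{-1}(w-1)$ gains two derivatives, and the reflection symmetry $u(x,0)=u(2\log(1+A)-x,0)$ forces $u_{x}(\log(1+A),0)=0$, so $u(\cdot,0)\in C^{2}(\mathbb{R})$ --- enough for the direct verification against (\ref{ini15}).
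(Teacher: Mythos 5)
Your proposal is correct and follows essentially the same route as the paper: recognize (\ref{scatteringdata}) as the data of the $\delta$-well $q(y,0)=-q_{0}\delta(y)$, integrate the $q\equiv 0$ equation away from $y=0$ with the decay conditions to get $w(y,0)=(1+Ae^{-|y|})^{2}$, pin down $A$ by the jump/singular-part matching at $y=0$ (the paper does this via the jump condition $C_{y}(0^{+})-C_{y}(0^{-})=-q_{0}C(0)$ for $C=w^{1/4}$, which is equivalent to your distributional matching in (\ref{eqn9})), and invert the Liouville change of variables exactly as in (\ref{y(x)}). The only real divergence is the last step: the paper actually \emph{derives} $u$ by solving the linear ODE $wu_{yy}+\tfrac12 w_{y}u_{y}-u=1-w$ in the $y$-variable by variation of parameters, fixing the constants by boundedness and $C^{1}$ matching at $y=0$, whereas you propose to \emph{verify} that the stated formula (\ref{ini15}) satisfies $u-u_{xx}+1=w$ and decays; since the decaying solution of the Helmholtz problem is unique, your verification closes the argument just as well (and, once it is in place, the inverse-scattering uniqueness you worry about is not needed at all --- the logic only requires the direct map from (\ref{ini15}) to $q=-q_{0}\delta$, which your chain establishes). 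Your observation that $w(\cdot,0)$ has a corner at $x=\log(1+A)$, so the datum sits at the edge of the class (\ref{restriction2}), is a fair caveat about the applicability of the asymptotic theorems, but it does not affect the correctness of the theorem or of either proof.
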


\begin{proof}
For convenience, we write $w\left( y\right) =w\left( y,0\right) $ and $%
u\left( y\right) =u\left( y,0\right) .$ Provided that the potential takes
the form of $q\left( y,0\right) =-q_{0}\delta \left( y\right) $ with $%
q_{0}\left( >0\right) $ being specified as a given constant, the scattering
data can then be obtained in the form of (\ref{scatteringdata}) (see for
example, \cite{DrazinJohnson}). In the following, we are aimed to derive $%
w\left( y,0\right) \ $and its derivation is given below.

The following equation subject to the condition\ $\lim_{\left \vert
y\right
\vert \rightarrow \infty }C\left( y\right) =1$ is considered%
\begin{equation}
C_{yy}=C\left( q\left( y,0\right) +\frac{1}{4}\right) -\frac{1}{4C^{3}}.
\label{C eq'n t=0}
\end{equation}%
From \cite{Constantin2001} it follows that $w\left( y,0\right) =C^{4}\left(
y\right) $ is the solution of (\ref{eqn9}). Consider (\ref{C eq'n t=0}) in $%
y>0.$ Multiply $C_{y}$ on both sides of $C_{yy}=\frac{C}{4}-\frac{1}{4C^{3}}$
and then integrate the resulting equation from $y$ to $\infty ,$ yielding%
\begin{equation*}
\left. \frac{\left( C_{y}\right) ^{2}}{2}\right \vert _{y}^{\infty
}=\int_{y}^{\infty }\left( \frac{C\left( z\right) }{4}-\frac{1}{4C^{3}\left(
z\right) }\right) C_{z}\left( z\right) dz=\int_{C\left( y\right) }^{1}\left(
\frac{C}{4}-\frac{1}{4C^{3}\left( z\right) }\right) dC.
\end{equation*}%
Then we have $C_{y}=\pm \frac{1}{2}\left( C-\frac{1}{C}\right) .$ The
negative sign considered above is based on the fact that $C\left( y\right)
\rightarrow 1$ as $y\rightarrow \infty $. After integration, we can obtain $%
C\left( y\right) =\left( \left( C_{0}^{2}-1\right) e^{-y}+1\right) ^{\frac{1%
}{2}}$ for $y>0,$ where $C_{0}$ is an integration constant. For $y<0,$ the
derivation procedure is similar to the case for $y>0.$ Therefore $C\left(
y\right) $ can be expressed as%
\begin{equation}
C\left( y\right) =\left( \left( C_{0}^{2}-1\right) e^{-\left \vert y\right
\vert }+1\right) ^{\frac{1}{2}}\text{.}  \label{C formula}
\end{equation}%
Given $\varepsilon >0$, integration of the equation (\ref{C eq'n t=0}) from $%
-\varepsilon $ to $\varepsilon $ yields%
\begin{equation*}
C_{y}\left( \varepsilon \right) -C_{y}\left( -\varepsilon \right)
=\int_{-\varepsilon }^{\varepsilon }\left[ C\left( y\right) \left(
-q_{0}\delta \left( y\right) +\frac{1}{4}\right) -\frac{1}{4C^{3}\left(
y\right) }\right] dy.
\end{equation*}%
By letting $\varepsilon \rightarrow 0,$ we can have $C_{y}\left(
0^{+}\right) -C_{y}\left( 0^{-}\right) =-q_{0}C\left( 0\right) .$
Substituting (\ref{C formula}) into the above equation, it follows that $%
C_{0}^{2}=\frac{1}{1-q_{0}}.$ The functional form for $w\left( y,0\right) $
shown below\ can then be derived provided that $q_{0}\in \left( 0,1\right) ,$%
\begin{equation}
w\left( y,0\right) =C^{4}\left( y\right) =\left( Ae^{-\left \vert y\right
\vert }+1\right) ^{2}.  \label{w(y,0))}
\end{equation}

In the next step, the relation between $y$ and $x$ in (\ref{eqn6}) will be
derived. From (\ref{eqn6}), $y$ and $x$ satisfy%
\begin{equation}
\frac{dy}{dx}=\left( w\left( y,0\right) \right) ^{\frac{1}{2}},\text{ }x\in
%TCIMACRO{\U{211d} }%
%BeginExpansion
\mathbb{R}
%EndExpansion
\label{y to x eq'n}
\end{equation}%
under the boundary condition $\lim_{x\rightarrow \infty }\left( y\left(
x\right) -x\right) =0.$ Substituting (\ref{w(y,0))}) into (\ref{y to x eq'n}%
), we obtain%
\begin{equation}
\frac{dy}{dx}=1+Ae^{-\left \vert y\right \vert }.  \label{dy/dx}
\end{equation}%
Integration of (\ref{dy/dx}) leads to $y=\log \left( e^{x+c_{1}}-A\right) $
for $y>0,$ where $c_{1}$ is an integration constant. By employing the
condition $\lim_{x\rightarrow \infty }\left( y\left( x\right) -x\right) =0,$
$c_{1}=0$ can be obtained. Then, we can have $y=\log \left( e^{x}-A\right) $
for $y>0$, that is, $x>\log \left( 1+A\right) .$

Integration of equation (\ref{dy/dx}) in $y<0$ yields $y=\log \frac{%
e^{x+c_{2}}}{1-Ae^{x+c_{2}}},$ where $c_{2}$ is an integration constant.
Here $y<0$ is equivalent to $x<-\log \left( 1+A\right) -c_{2}.$

Let $y=y\left( x\right) $ be defined on the whole $x$-axis. By choosing $%
c_{2}$ such that $\log \left( 1+A\right) =-\log \left( 1+A\right) -c_{2}$,
thereby yielding $e^{c_{2}}=\frac{1}{\left( 1+A\right) ^{2}}.$ Then, we can
get%
\begin{equation}
y=\left \{
\begin{array}{ll}
\log \left( e^{x}-A\right) ,\medskip & \text{for }x\geq \log \left(
1+A\right) , \\
-\log \left( \left( 1+A\right) ^{2}e^{-x}-A\right) , & \text{for }x<\log
\left( 1+A\right) .%
\end{array}%
\right.  \label{y(x)}
\end{equation}%
From (\ref{w(y,0))}), it can be observed that $w_{y}\left( y,0\right) =-$sgn$%
\left( y\right) 2Ae^{-\left \vert y\right \vert }\left( Ae^{-\left \vert
y\right \vert }+1\right) .$ Recall $w=u-u_{xx}+1$ if $u$ and $w$ are
functions of $x$. In $y$ domain, by (\ref{y to x eq'n}), $w=u-u_{xx}+1$
turns out to be equivalent to $wu_{yy}+\frac{1}{2}w_{y}u_{y}-u=1-w.$ From (%
\ref{w(y,0))}), this equation can be written as%
\begin{equation}
u_{yy}-\text{sgn}\left( y\right) \frac{A}{A+e^{\left \vert y\right \vert }}%
u_{y}-\frac{1}{\left( Ae^{-\left \vert y\right \vert }+1\right) ^{2}}u=\frac{%
1}{\left( Ae^{-\left \vert y\right \vert }+1\right) ^{2}}-1.
\label{u eq'n in y domain>0}
\end{equation}%
We solve firstly the solution $u$ in the half domain $y>0$. The homogeneous
equation of (\ref{u eq'n in y domain>0}) can be shown to have a fundamental
set of solutions, namely%
\begin{equation*}
u_{1}\left( y\right) :=\frac{-A\left( \frac{1}{2}e^{y}+A\right) }{Ae^{-y}+1},%
\text{ }u_{2}\left( y\right) :=\frac{-Ae^{-y}}{Ae^{-y}+1}.
\end{equation*}%
We suppose that the solution of (\ref{u eq'n in y domain>0}) is $u\left(
y\right) =c_{3}\left( y\right) u_{1}\left( y\right) +c_{4}\left( y\right)
u_{2}\left( y\right) $ for some $c_{3}\left( y\right) $ and $c_{4}\left(
y\right) $ to be determined. Substitution of this expression into (\ref{u
eq'n in y domain>0}) yields%
\begin{equation*}
c_{3}^{\prime }\left( y\right) =\frac{-1}{A}e^{-y}\left( \frac{1}{\left(
Ae^{-y}+1\right) ^{2}}-1\right) ,\text{ \ }c_{4}^{\prime }\left( y\right) =%
\frac{1}{A}\left( \frac{1}{2}e^{y}+A\right) \left( \frac{1}{\left(
Ae^{-y}+1\right) ^{2}}-1\right) .
\end{equation*}%
Integrating the above two expressions for $y>0,$ we have%
\begin{align*}
u\left( y\right) & =\frac{-A\left( \frac{1}{2}e^{y}+A\right) }{Ae^{-y}+1}%
\left( \frac{-Ae^{-2y}}{Ae^{-y}+1}+c_{5}\right) -\frac{Ae^{-y}}{Ae^{-y}+1}%
\left( \frac{-1}{2}\left( \frac{2Aye^{-y}-Ae^{-y}+2y}{Ae^{-y}+1}\right)
+c_{6}\right) \\
& =\frac{A\left( 2y+1\right) e^{-y}}{2\left( Ae^{-y}+1\right) }-c_{5}\frac{%
A\left( e^{y}+2A\right) }{2\left( Ae^{-y}+1\right) }-c_{6}\frac{Ae^{-y}}{%
Ae^{-y}+1}.
\end{align*}%
Since we want to get the bounded solution for $y\in
%TCIMACRO{\U{211d} }%
%BeginExpansion
\mathbb{R}
%EndExpansion
,$ $c_{5}\equiv 0$ turns out to be the consequence. Owing to the symmetry of
the solution $u\left( y\right) $ about $y=0,$ we have%
\begin{equation*}
u\left( y\right) =\frac{A\left( 2\left \vert y\right \vert +1\right)
e^{-\left \vert y\right \vert }}{2\left( Ae^{-\left \vert y\right \vert
}+1\right) }+c_{7}\frac{Ae^{-\left \vert y\right \vert }}{Ae^{-\left \vert
y\right \vert }+1}=\frac{Ae^{-\left \vert y\right \vert }}{2\left(
Ae^{-\left \vert y\right \vert }+1\right) }\left( 2\left \vert y\right \vert
+1+c_{8}\right) .
\end{equation*}%
In order for $u\in C^{1}\left(
%TCIMACRO{\U{211d} }%
%BeginExpansion
\mathbb{R}
%EndExpansion
\right) ,$ we need to choose $c_{8}=2A+1,$ thereby leading to $u\left(
y,0\right) =\frac{A\left( \left \vert y\right \vert +A+1\right) }{\left(
A+e^{\left \vert y\right \vert }\right) }.$ By substituting the expression (%
\ref{y(x)}) into the above equation, we can derive the initial solution of
the form given in (\ref{ini15}). The proof is completed.
\end{proof}

\noindent \textbf{Remark}. From (\ref{w(y,0))}) and (\ref{y(x)}), we can
also get the expression of $w\left( x,0\right) :$%
\begin{equation*}
w\left( x,0\right) =\left \{
\begin{array}{ll}
\left( \frac{1}{1-Ae^{-x}}\right) ^{2},\medskip & \text{for }x\geq \log
\left( 1+A\right) , \\
\left( \frac{\left( 1+A\right) ^{2}}{\left( 1+A\right) ^{2}-Ae^{x}}\right)
^{2}, & \text{for }x<\log \left( 1+A\right) .%
\end{array}%
\right.
\end{equation*}%
Then, $w\left( x,0\right) $ satisfies the requirement (\ref{eq:bc2copy}).
The need of solving the inverse scattering problem \cite{MKST,Constantin2001}
becomes clear. By \cite{Constantin2001}, we have $w\left( x,t\right) >0$ for
all $x\in
%TCIMACRO{\U{211d} }%
%BeginExpansion
\mathbb{R}
%EndExpansion
$ and for all $t>0.$

According to the work of \cite{MKST}, the long time asymptotic behavior of
the CH solution depends largely on the number of discrete spectra $-\mu
_{1}^{2},\cdots ,-\mu _{N}^{2}$ and the existence of the reflection
coefficient $\tilde{R}\left( k\right) $ of (\ref{Lax1schrodinger}). Thanks
to Theorem \ref{example}, we know that only one discrete spectrum $-\mu
_{1}^{2}$ ($N=1$) exists. Therefore, only one soliton is permitted to appear
in the "soliton region" \cite{MKST}. This will be revealed in the numerical
results shown in section 5.

\section{Symplecticity and dispersion relation equation preserving scheme in
space-time domain \label{sec:Scheme}}

Camassa-Holm equation has many remarkable structure preserving features. All
of them should be taken into account while developing a reliable finite
difference scheme in time domain for getting a physically correct CH
solution. In what follows, some of the mathematical properties that are
directly related to our scheme development are recalled. First of all, there
exist two compatible Hamiltonian descriptions of the Camassa-Holm equation.
The first one is expressed as $m_{t}=-D_{1}\frac{\delta H_{1}}{\delta m},$
where $D_{1}=m\frac{\partial }{\partial x}+\frac{\partial }{\partial x}m$
and $H_{1}=\frac{1}{2}\int u^{2}+(u_{x})^{2}~dx$. The second one is $%
m_{t}=-D_{2}\frac{\delta H_{2}}{\delta m},$ where $D_{2}=\frac{\partial }{%
\partial x}+\frac{\partial ^{3}}{\partial x^{3}}$ and $H_{2}=\frac{1}{2}\int
u^{3}+u(u_{xx})^{2}~dx$. A scheme applicable to solve equations (\ref{eq:CH}-%
\ref{eq:bc2copy}) should discretely conserve Hamiltonians cast in the
corresponding integrable forms. A symplecticity-preserving temporal scheme
developed in Section 3.2 will be applied to the current approximation of the
time derivative term. The nonlinear CH equation is highly dispersive. It is
therefore important to develop a dispersion-error reducing scheme in Section
3.1 to accurately approximate the spatial derivative terms in CH equation.

%%%%%%%%%%%%%%%%%%%%%%%%%%%%%%%%%%%%%%%%%%%%%%%%%%%%%%%%%%%%%%%%%%%%%%%%%%%%%%
\  \  \  \ To avoid approximating the space-time mixed derivative and the
third-order dispersive terms, equation (\ref{eq:CH}) is rewritten to its
equivalent system of equations possessing only the first-order derivative
terms. Subject to a properly prescribed boundary condition and an initial
condition $u(x,t=0)=f\in H^{1}$, the solution to (\ref{eq:CH}) is sought
from the following inhomogeneous hyperbolic nonlinear pure advection
equation for $u$%
\begin{equation}
u_{t}+uu_{x}=-P_{x}.  \label{eq:one-step1}
\end{equation}%
The pressure-like variable $P$ shown above in the right hand side is
governed by the following elliptic Helmholtz equation rather than by the
Poisson equation encountered in the inviscid Euler or the Navier-Stokes
fluid flow%
\begin{equation}
P-P_{xx}=u^{2}+uu_{x}.  \label{eq:one-step-p}
\end{equation}%
In the light of the above two equations, equation (\ref{eq:CH}) is without
doubt classified to be elliptic-hyperbolic provided that the solution
remains smooth.

%%%%%%%%%%%%%%%%%%%%%%%%%%%%%%%%%%%%%%%%%%%%%%%%%%%%%%%%%%%%%%%%%%%%%%%%%%%%%%%%%%%%%%%

\subsection{Approximation of spatial derivatives}

Within the framework of the combined compact difference (CCD) schemes, the
first derivative term $\frac{\partial u}{\partial x}$ and the second
derivative term $\frac{\partial ^{2}u}{\partial x^{2}}$ are approximated
implicitly in a grid of three-point stencil as follows for the case of $u>0$%
\begin{align}
& a_{1}\frac{\partial u}{\partial x}|_{i-1}+\frac{\partial u}{\partial x}%
|_{i}+a_{3}\frac{\partial u}{\partial x}|_{i+1}  \notag  \label{eq:CCD-EQ1}
\\
=\frac{1}{h}(c_{1}u_{i-2}+c_{2}& u_{i-1}+c_{3}u_{i})-h\left( b_{1}\frac{%
\partial ^{2}u}{\partial x^{2}}|_{i-1}+b_{2}\frac{\partial ^{2}u}{\partial
x^{2}}|_{i}+b_{3}\frac{\partial ^{2}u}{\partial x^{2}}|_{i+1}\right) , \\
-\frac{1}{8}\frac{\partial ^{2}u}{\partial x^{2}}|_{i-1}+\frac{\partial ^{2}u%
}{\partial x^{2}}|_{i}-\frac{1}{8}\frac{\partial ^{2}u}{\partial x^{2}}%
|_{i+1}& =\frac{3}{h^{2}}(u_{i-1}-2u_{i}+u_{i+1})-\frac{9}{8h}\left( -\frac{%
\partial u}{\partial x}|_{i-1}+\frac{\partial u}{\partial x}|_{i+1}\right) .
\label{eq:CCD-EQ2}
\end{align}%
The coefficients shown in (\ref{eq:CCD-EQ2}) have been determined solely
from the modified equation analysis, yielding a formal accuracy order of six
\cite{bib:Chu98}. The coefficients in (\ref{eq:CCD-EQ1}) are then derived by
performing the Taylor series expansion on the terms $u_{i-1}$, $u_{i+1}$, $%
\frac{\partial u}{\partial x}|_{i-1}$, $\frac{\partial u}{\partial x}|_{i}$,
$\frac{\partial u}{\partial x}|_{i+1}$, $\frac{\partial ^{2}u}{\partial x^{2}%
}|_{i-1}$, $\frac{\partial ^{2}u}{\partial x^{2}}|_{i}$ and $\frac{\partial
^{2}u}{\partial x^{2}}|_{i+1}$ with respect to $u_{i}$ from the derived
modified equation for (\ref{eq:CCD-EQ1}). One more algebraic equation
derived below is needed to uniquely determine all the introduced
coefficients shown in (\ref{eq:CCD-EQ1}).

The strategy of deriving the last algebraic equation is to reduce the
dispersion error by matching the exact and numerical wavenumbers. Performing
an error reduction procedure amounts to equating the effective wavenumbers $%
\alpha ^{^{\prime }}$ and $\alpha ^{^{\prime \prime }}$ to those shown in
the right-hand sides of the following equations. (\ref{eq:transform3}) and (%
\ref{eq:transform4}) \cite{bib:Tam93}.%
\begin{equation}
\mathbf{i}\alpha ^{^{\prime }}h~(a_{1}e^{-\mathbf{i}\alpha h}+1+a_{3}e^{%
\mathbf{i}\alpha h})=(c_{1}e^{-2\mathbf{i}\alpha h}+c_{2}e^{-\mathbf{i}%
\alpha h}+c_{3})-(\mathbf{i}\alpha ^{^{\prime \prime }}h)^{2}(b_{1}e^{-%
\mathbf{i}\alpha h}+b_{2}+b_{3}e^{\mathbf{i}\alpha h}),
\label{eq:transform3}
\end{equation}%
\begin{equation}
(\mathbf{i}\alpha ^{^{\prime \prime }}h)^{2}(-\frac{1}{8}e^{-\mathbf{i}%
\alpha h}+1-\frac{1}{8}e^{\mathbf{i}\alpha h})=(3e^{-\mathbf{i}\alpha
h}-6+3e^{\mathbf{i}\alpha h})-\mathbf{i}\alpha ^{^{\prime }}h~(-\frac{9}{8}%
e^{-\mathbf{i}\alpha h}+\frac{9}{8}e^{\mathbf{i}\alpha h}).
\label{eq:transform4}
\end{equation}%
The expression of $\alpha ^{\prime }h$ can then be derived from the above
two equations. The real and imaginary parts of the numerical modified (or
scaled) wavenumber $\alpha ^{\prime }h$ account respectively for the
dispersion error (or phase error) and the dissipation error (or amplitude
error).

To get a higher dispersive accuracy for $\alpha ^{\prime }$, we demand that
the value of $\alpha h$ should be sufficiently closer to the real part of $%
\alpha ^{\prime }h$ or $\Re \lbrack \alpha ^{\prime }h]$. The error function
$E(\alpha )$ defined below should be a positive minimum over the following
integration interval of the modified wavenumber $\alpha h$
\begin{equation}
E(\alpha )=\int_{0}^{\frac{7\pi }{8}}\left[ W\left( \alpha h-\Re \lbrack
\alpha ^{\prime }h]\right) \right] ^{2}d(\alpha h).  \label{eq:error_DRP}
\end{equation}%
The weighting function $W$ in (\ref{eq:error_DRP}) is chosen to be the
denominator of $\left( \alpha h-\Re \lbrack \alpha ^{\prime }h]\right) $ so
that we can integrate $E(\alpha )$ analytically \cite{bib:Ashcroft2003}. To
make the error function defined in the modified wavenumber range of $0\leq
\alpha h\leq \frac{7\pi }{8}$ to be positive or minimal, the extreme
condition given by $\frac{\partial E}{\partial c_{3}}=0$ is enforced. This
constraint equation for the sake of maximizing dispersion accuracy is used
together with the other seven algebraic equations derived previously through
the modified equation analysis. Employment of the two rigorous analysis
means described above enables us to get $a_{1}=0.888251792581$, $%
a_{3}=0.049229651564$, $b_{1}=0.150072398996$, $b_{2}=-0.250712794122$, $%
b_{3}=-0.012416467490$, $c_{1}=0.016661718438$, $c_{2}=-1.970804881023$ and $%
c_{3}=1.954143162584$. The resulting upwinding difference scheme developed
on theoretical basis in a grid stencil of three points $i-1$, $i$ and $i+1$
for $\frac{\partial u}{\partial x}$ has the spatial accuracy of order six
according to the derived modified equation, namely, $\frac{\partial u}{%
\partial x}=\frac{\partial u}{\partial x}|_{exact}+0.424003657\times
10^{-6}~h^{6}:\frac{\partial ^{7}u}{\partial x^{7}}+H.O.T.$ For $u<0$, the
proposed three-point non-centered CCD scheme can be similarly derived.
%for the derivative term $\frac{\partial u}{\partial x}$
%\begin{align}
%\label{eq:CCD-EQ1-2}\nonumber
% 0.049229651564\frac{\partial u}{\partial x}|_{i-1}
%  +\frac{\partial u}{\partial x}|_{i}
%  +0.888251792581\frac{\partial u}{\partial x}|_{i+1}\\\nonumber
%+ h ( 0.012416467490\frac{\partial^2 u}{\partial x^2}|_{i-1}
%          +0.250712794122\frac{\partial^2 u}{\partial x^2}|_{i}
%          -0.150072398996\frac{\partial^2 u}{\partial x^2}|_{i+1} ) \\
%  =\frac{1}{h}( -1.954143162584 u_{i} + 1.970804881023u_{i+1} - 0.016661718438u_{i+2} ).
%\end{align}

%The above derived sixth-order accurate upwinding combined compact difference scheme
%is also applied to approximate the gradient term $P_x$ shown in (\ref{eq:one-step1}).
The three-point combined compact difference (CCD) scheme \cite{bib:Chu98} is
used here to approximate the gradient terms $P_{x}$ shown in (\ref%
{eq:one-step1}) as follows
\begin{align}
\frac{h}{16}\frac{\partial ^{2}P}{\partial x^{2}}|_{i-1}-\frac{h}{16}\frac{%
\partial ^{2}P}{\partial x^{2}}|_{i+1}& =\frac{15}{16h}(-P_{i-1}+P_{i+1})+%
\left( \frac{7}{16}\frac{\partial P}{\partial x}|_{i-1}+\frac{\partial P}{%
\partial x}|_{i}+\frac{7}{16}\frac{\partial P}{\partial x}|_{i+1}\right) ,
\label{eq:CCD-EQ1-wenni} \\
-\frac{1}{8}\frac{\partial ^{2}P}{\partial x^{2}}|_{i-1}+\frac{\partial ^{2}P%
}{\partial x^{2}}|_{i}-\frac{1}{8}\frac{\partial ^{2}P}{\partial x^{2}}%
|_{i+1}& =\frac{1}{h^{2}}(3P_{i-1}-6P_{i}+3P_{i+1})-\frac{1}{h}\left( -\frac{%
9}{8}\frac{\partial P}{\partial x}|_{i-1}+\frac{9}{8}\frac{\partial P}{%
\partial x}|_{i+1}\right) .  \label{eq:CCD-EQ2-wenni}
\end{align}%
The above centered CCD scheme developed on theoretical basis in a stencil of
three grid points $i-1$, $i$ and $i+1$ for $\frac{\partial P}{\partial x}$
has the sixth-order accuracy.

The Helmholtz equation (or equation (\ref{eq:one-step-p})) for $P_{i}$ is
approximated as follows for
%$g_{i} = -(u^{2}_{i}+ u_{i}u_{x,i}+2\kappa u_{i})  $
$g_{i}=-(u_{i}^{2}+u_{i}u_{x,i})$
\begin{align}
& P_{i+1}-\left( 2+h^{2}+\frac{1}{12}h^{4}+\frac{1}{360}h^{6}\right)
P_{i}+P_{i-1}  \notag  \label{eq:eq16} \\
& =h^{2}g_{i}+\frac{1}{12}h^{4}\left( f_{i}+\frac{\partial ^{2}g_{i}}{%
\partial x^{2}}\right) +\frac{1}{360}h^{6}\left( g_{i}+\frac{\partial
^{2}g_{i}}{\partial x^{2}}+\frac{\partial ^{4}g_{i}}{\partial x^{4}}\right) .
\end{align}%
The corresponding modified equation for equation (\ref{eq:one-step-p}) is $%
\frac{\partial ^{2}P}{\partial x^{2}}-P=g+\frac{h^{6}}{20160}\frac{\partial
^{8}P}{\partial x^{8}}+\frac{h^{8}}{1814400}\frac{\partial ^{10}P}{\partial
x^{10}}+\cdots +H.O.T.$. This proposed three-point compact difference scheme
is therefore sixth-order accurate.

%%%%%%%%%%%%%%%%%%%%%%%%%%%%%%%%%%%%%%%%%%%%%%%%%%%%%%%%%%%%%%%%%%%%%%%%%%%%%%%%%%%%%%%

\subsection{Approximation of temporal derivatives}

%%%%%%%%%%%%%%%%%%%%%%%%%%%%%%%%%%%%%%%%%%%%%%%%%%%%%%%%%%%%%%%%%%%%%%%%%%%%%%%%%%%%%%%
\  \  \  \ Since equation (\ref{eq:one-step1}) has a symplectic structure, the
time-stepping scheme cannot be chosen arbitrarily provided that a long-term
accurate solution is sought. To conserve symplectic property existing in the
currently investigated non-dissipative Hamiltonian system of equations (\ref%
{eq:one-step1}-\ref{eq:one-step-p}), the sixth-order accurate symplectic
Runge-Kutta scheme \cite{bib:Oevel(1997)} given below is applied iteratively
to accurately integrate the CH equation
\begin{align}
u^{(1)}=u^{n}& +\Delta t\left[ \frac{5}{36}F^{(1)}+(\frac{2}{9}+\frac{2%
\tilde{c}}{3})F^{(2)}+(\frac{5}{36}+\frac{\tilde{c}}{3})F^{(3)}\right] ,
\label{eq:RK1} \\
u^{(2)}=u^{n}& +\Delta t\left[ (\frac{5}{36}-\frac{5\tilde{c}}{12})F^{(1)}+(%
\frac{2}{9})F^{(2)}+(\frac{5}{36}+\frac{5\tilde{c}}{12})F^{(3)}\right] ,
\label{eq:RK2} \\
u^{(3)}=u^{n}& +\Delta t\left[ (\frac{5}{36}-\frac{\tilde{c}}{3})F^{(1)}+(%
\frac{2}{9}-\frac{2\tilde{c}}{3})F^{(2)}+\frac{5}{36}F^{(3)}\right] ,
\label{eq:RK3} \\
u^{n+1}=u^{n}& +\Delta t\left[ \frac{5}{18}F^{(1)}+\frac{4}{9}F^{(2)}+\frac{5%
}{18}F^{(3)}\right] .  \label{eq:RK4}
\end{align}%
In the above, $\tilde{c}=\frac{1}{2}\sqrt{\frac{3}{5}}$ and $%
F^{(i)}=F(u^{(i)},P^{(i)})$, $i=1,2,3$.

%The solution $u^{n+1}$ calculated from equation (\ref{eq:RK4})
%needs to solve equations (\ref{eq:RK1}) $-$ (\ref{eq:RK3}) iteratively
%for obtaining the values of $u^{(1)}$, $u^{(2)}$ and $u^{(3)}$.
%The Helmholtz equation (\ref{eq:one-step-p}) is then solved
%to get $P^{(1)}$, $P^{(2)}$ and $P^{(3)}$.
%Upon reaching the specified convergence criteria,
%we can get first the solution $u^{n+1}$ and then the solution $P^{n+1}$.
%This iterative procedure is repeated until the difference
%cast in $L_{2}$-norm form
%of the solutions calculated from two consecutive steps
%falls below $10^{-9}$.

%%%%%%%%%%%%%%%%%%%%%%%%%%%%%%%%%%%%%%%%%%%%%%%%%%%%%%%%%%%%%%%%%%%%%%%%%%%%%%

\section{Long-time asymptotics of the CH equation\label{long time}}

In \cite{MKST}, CH equation (\ref{eq:CH}) has been transformed to its
corresponding Riemann-Hilbert problem. The long time asymptotics can then be
derived by using the nonlinear steepest descent method \cite{DeiftZhou} and
the isomonodromy method \cite{FIKN}. Four solution sectors and two
transition regions have been found in the $\left( x,t>0\right) $ half-plane.
The leading term of the long-time asymptotic solution $u\left( x,t\right) $
in each region behaves differently, depending on the slope, namely, $\zeta =%
\frac{x}{t}$ (see Figure \ref{fourregionMKST}).

$(i)$ $\zeta >2+\varepsilon $ for any $\varepsilon >0:$ soliton region,

$(ii)$ $0\leq \zeta <2-\varepsilon $ for any $\varepsilon >0:$ first
oscillatory\ region,

$(iii)$ $\frac{-1}{4}+\varepsilon <\zeta <0$ for any $\varepsilon >0:$
second oscillatory\ region,

$(iv)$ $\zeta <\frac{-1}{4}-\varepsilon $ for any $\varepsilon >0:$ fast
decay\ region.

\noindent Moreover, by \cite{bib:Monvel(2010)}, the following two transition
regions exist for any $C>0:$

(1) $\left \vert \zeta -2\right \vert t^{\frac{2}{3}}<C$ (between the
solution sectors $(i)$ and $(ii)$ shown in Figure \ref{fourregionMKST}),

(2) $\left \vert \zeta +\frac{1}{4}\right \vert t^{\frac{2}{3}}<C$ (between
the solution sectors $(iii)$ and $(iv)$ shown in Figure \ref{fourregionMKST}%
).

\begin{figure}[h]
\includegraphics[width=.8\textwidth]{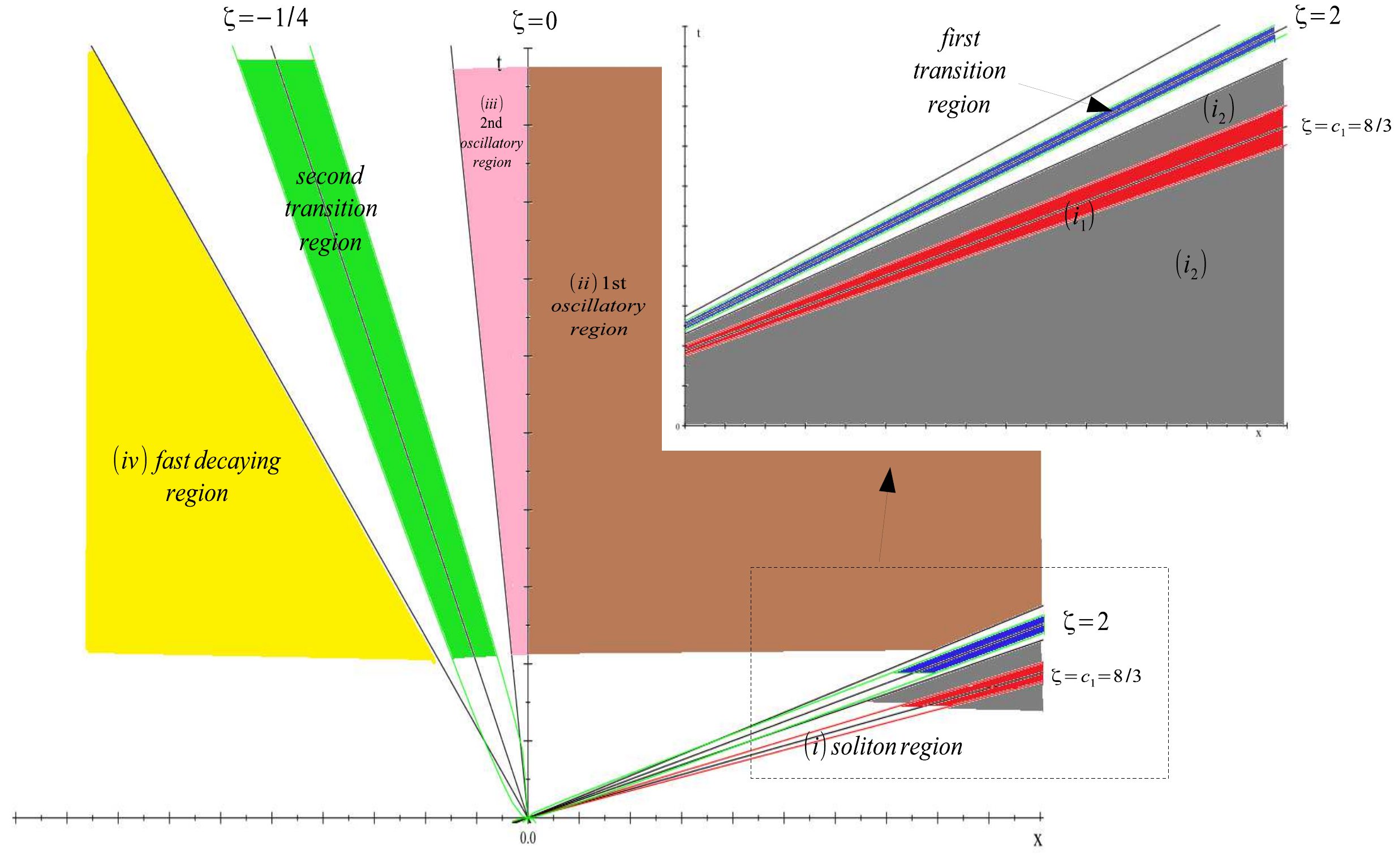}
\caption{{\protect \small The long-time asymptotics of the CH solutions in
the qualitatively different regions with }$q_{0}=\frac{1}{2}${\protect \small %
. Then }$c_{1}${\protect \small \ considered in the region (i) in Section 5.1
can be found as }$c_{1}=\frac{8}{3}.${\protect \small \ Note that }$\protect%
\zeta ${\protect \small \ is defined as }$\protect \zeta =\frac{x}{t}.$%
{\protect \small \ For }$\protect \varepsilon =\frac{14}{80},${\protect \small %
\ the soliton region }$(i)${\protect \small \ (}$\frac{x}{t}>2+\protect%
\varepsilon ${\protect \small ) which is the union of the red region }$%
(i_{1}) ${\protect \small \ (between the lies }$\frac{x}{t}=\frac{8}{3}-%
\protect \varepsilon ${\protect \small \ and }$\frac{x}{t}=\frac{8}{3}+\protect%
\varepsilon ${\protect \small ) and the gray region }$(i_{2})${\protect \small %
\ (two sectors, one is between the lines }$\frac{x}{t}=2+\protect \varepsilon
${\protect \small \ and }$\frac{x}{t}=\frac{8}{3}-\protect \varepsilon ,$%
{\protect \small \ the other one is between the lines }$\frac{x}{t}=\frac{8}{3%
}+\protect \varepsilon ${\protect \small \ and }$t=0${\protect \small ).}}
\label{fourregionMKST}
\end{figure}

Subject to the newly derived initial condition (\ref{ini15}), the long-time
asymptotic solutions in the six regions can then be expressed in terms of
the scattering data (\ref{scatteringdata}) or equivalently in terms of the
given parameter $q_{0}\in \left( 0,1\right) $ appearing in (\ref{ini15}).
The details can be seen in the Appendix A. In the following, the solution of
(\ref{eq:CH}) is sought subject to (\ref{ini15}) by applying the numerical
scheme detailed in Section \ref{sec:Scheme}. The FDTD solution $u_{\text{num}%
}\left( x,t\right) $ will be compared with the asymptotic form of the
solution expressed in terms of the scattering data (\ref{scatteringdata})
given in \cite{MKST}.

In Figure \ref{fourregionYU} the profile of the solution $u_{\text{num}%
}\left( x,t\right) $ predicted, for example, at $t=40$ is plotted with $%
q_{0}=\frac{1}{2}$. In the rest of this paper we will numerically revisit
these asymptotics summarized above in the physically distinct regions with $%
q_{0}=\frac{1}{2}$.

\begin{figure}[h]
\includegraphics[width=.99\textwidth]{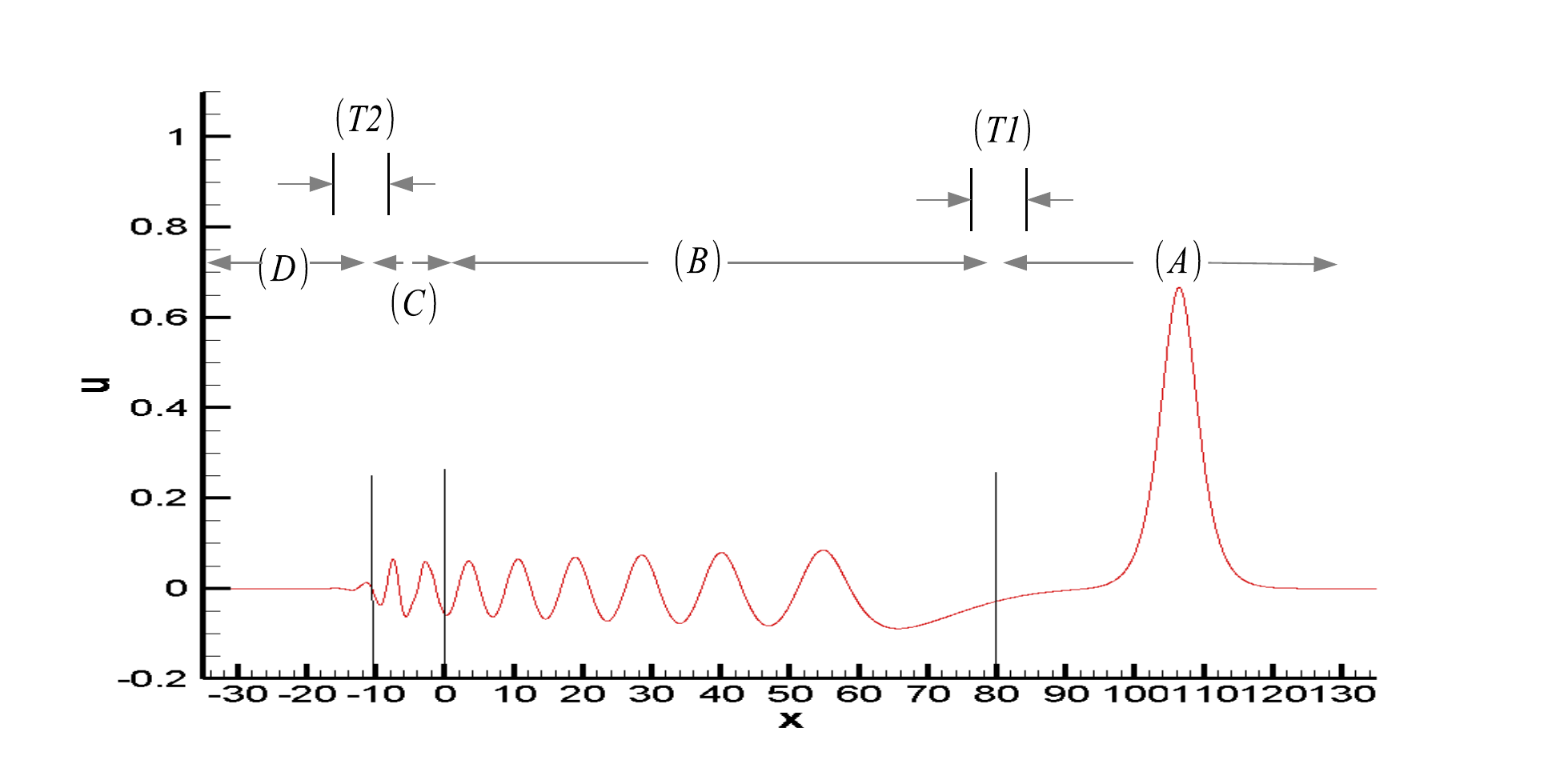}
\caption{{\protect \small The computed long-time asymptotics of the FDTD
solutions for the CH equation in four different solution regions and in two
transition regions at }$t=40${\protect \small . Note that (A) soliton region;
(B) slowly decaying modulated oscillation region; (C) region of the sum of
two decaying modulated oscillations; (D) fast decaying region; (T1) first
transition region; (T2) second transition region.}}
\label{fourregionYU}
\end{figure}

\subsection{Discussion of results in four regions $(i)$ - $(iv)$}

\subsubsection{Solution in the soliton region $(i)$}

In the soliton region $\zeta >2+\varepsilon ,$ $u\left( x,t\right) $ in $(i)$
behaves like a sole soliton due to the existence of a single discrete
eigenvalue. Let $c_{1}=\frac{2}{1-4\mu _{1}^{2}}$. For any $\varepsilon >0,$
the region $(i)$ can be divided into the following two subdomains (see \cite%
{MKST} or the Appendix A):

$(i_{1})$ If $\left \vert \zeta -c_{1}\right \vert <\varepsilon $,%
\begin{equation}
u\left( x,t\right) =u_{\text{sol}}^{1}\left( x,t\right) +O\left(
t^{-l}\right) \text{ for any }l>0,  \label{soliton region error}
\end{equation}%
where $u_{\text{sol}}^{1}\left( x,t\right) $ is the 1-soliton solution and
it can be expressed parametrically as (\ref{soliton}) and (\ref%
{1-solitonFOUMULA1}) in the Appendix A with $j=1$.

$(i_{2})$ If $\left \vert \zeta -c_{1}\right \vert \geq \varepsilon $, $%
u\left( x,t\right) $ is rapidly decreasing.$\smallskip $

According to (\ref{soliton wrt q}) in the Appendix A, the dependence of the
1-soliton solution on the parameter $q_{0}$ can be clearly seen. Since $%
q_{0}=\frac{1}{2}$, it can be found that $c_{1}=\frac{8}{3},$ and (\ref%
{soliton wrt q}) takes the form as:%
\begin{equation}
\left \{
\begin{tabular}{l}
$u\left( y,t\right) =\frac{4}{3}\frac{1}{\frac{5}{4}+\frac{3}{4}\left( e^{%
\frac{1}{2}\left( y-\frac{8}{3}t\right) }+\frac{1}{4}e^{\frac{-1}{2}\left( y-%
\frac{8}{3}t\right) }\right) },\medskip $ \\
$x\left( y,t\right) =y+\log \frac{1+\frac{3}{2}e^{-\frac{1}{2}\left( y-\frac{%
8}{3}t\right) }}{1+\frac{1}{6}e^{-\frac{1}{2}\left( y-\frac{8}{3}t\right) }}%
. $%
\end{tabular}%
\right.  \label{q=1/2 1soliton}
\end{equation}

Equation (\ref{q=1/2 1soliton}) is valid in the region ($i_{1}$): $\left
\vert \frac{x}{t}-\frac{8}{3}\right \vert <\varepsilon $. Take $\varepsilon =%
\frac{14}{80}$ as an example, at $t=80,$ the soliton region (A) shown in
Figure \ref{fourregionYU} is known to fall in the region $(i_{1})$, namely, $%
199.\, \allowbreak 33<x<227.\, \allowbreak 33$. Outside this region, i.e., $%
x\geq 227.6633$ or $174<x\leq 199.33,$ $u\left( x,t\right) \sim 0.$ The
numerically predicted solution $u_{\text{num}}\left( x,t\right) $ and the
asymptotic solution $u_{\text{sol}}^{1}\left( x,80\right) $ in (\ref{q=1/2
1soliton}) are plotted in Figure \ref{solitonregionn}. From (\ref{soliton
region error}), the decay of the difference between $u_{\text{num}}\left(
x,t\right) $ and $u_{\text{sol}}^{1}\left( x,t\right) $ is sufficiently
rapid with respect to time such that two solutions can match very well at $%
t=80.$ However, in region $(ii)$ the decay rate is relatively slow judging
either from the previous result or from our numerical result in the
following subsection.

\begin{figure}[h]
\includegraphics[width=.8\textwidth]{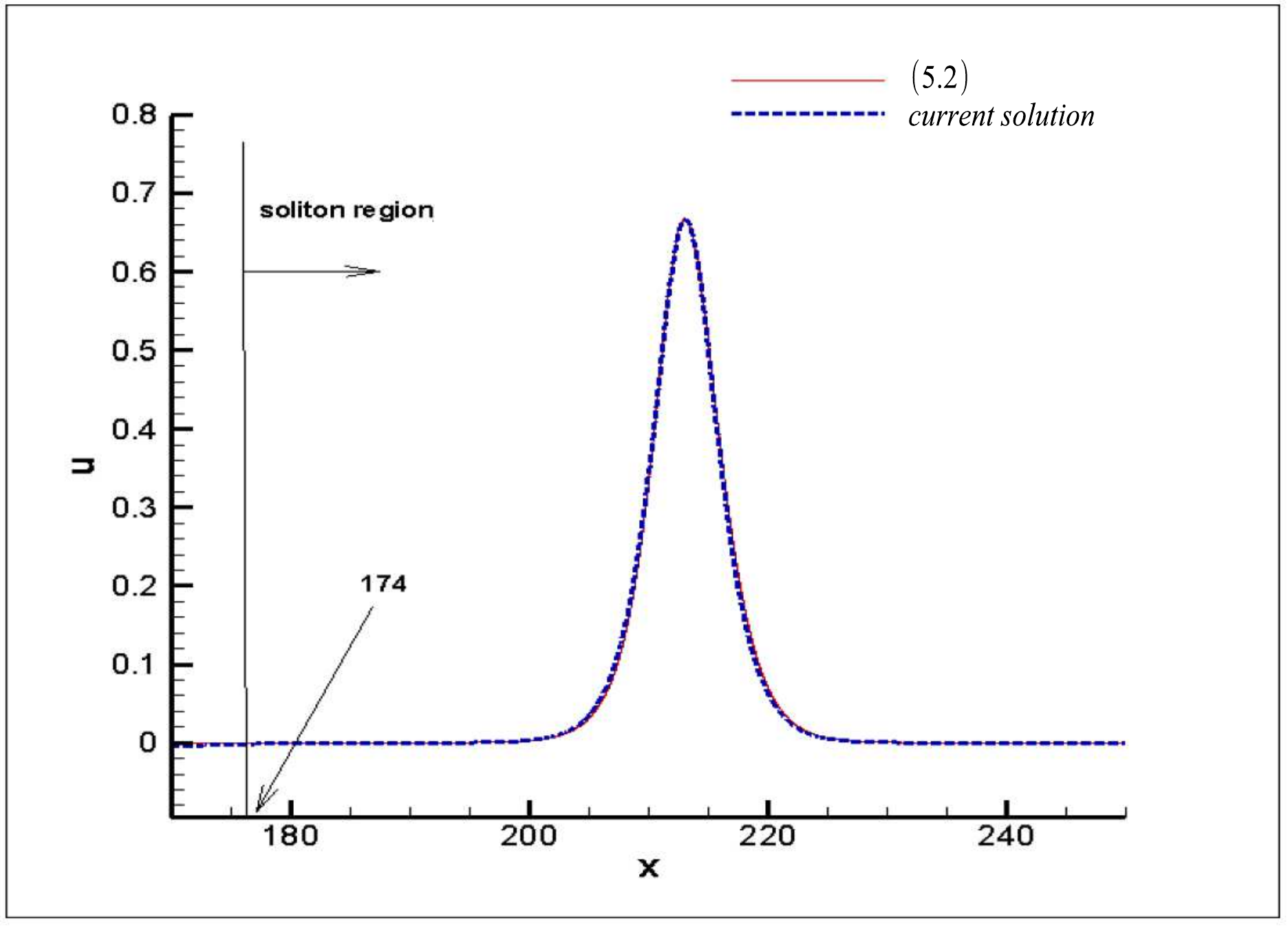}
\caption{{\protect \small Comparison of the currently predicted finite
difference solution and the asymptotic solution (\protect \ref{q=1/2 1soliton}%
) in the soliton region at }$t=80,${\protect \small \ }$\protect \varepsilon =%
\frac{14}{80}.$}
\label{solitonregionn}
\end{figure}

\subsubsection{Solution in the slowly decaying modulated oscillation region $%
(ii)$}

In the first oscillatory\ region $0\leq \zeta =\frac{x}{t}<2-\varepsilon ,$
the solution $u\left( x,t\right) $ takes the form as%
\begin{equation}
u\left( x,t\right) =\frac{c_{1}^{\left( 0\right) }}{\sqrt{t}}\sin \left(
c_{2}^{\left( 0\right) }t+c_{3}^{\left( 0\right) }\log t+c_{4}^{\left(
0\right) }\right) +O\left( t^{-\alpha }\right) ,
\label{oscillatory region (ii)}
\end{equation}%
for any $\alpha \in \left( \frac{1}{2},1\right) $ provided $l\geq 5.$ $%
c_{n}^{\left( 0\right) }$ ($n=1,...,4$) are functions of $\zeta \ $depending
on the reflection coefficient $R\left( k\right) $ (see \cite{MKST} or the
Appendix A).

At $t=40,80,160,200,$ and $\varepsilon =\frac{14}{80},$ the slowly decaying
modulated oscillation region (B) shown in Figure \ref{fourregionYU} is known
to be in the region of $0\leq x<73,0\leq x<146,0\leq x<292$ and $0\leq x<365$%
, respectively. The numerically predicted solution and the asymptotic
solution (\ref{oscillatory region (ii)}) are plotted in Figure \ref%
{YU2ndregionsimulation}.

By comparing the two formulas (\ref{soliton region error}) and (\ref%
{oscillatory region (ii)}), it is clear that the decay order of the computed
difference in the region $(i)$ is much larger than that in the region $(ii).$
Hence to get a good match between the two solutions in (\ref{oscillatory
region (ii)}), the time span needs to be longer than that in region $(i)$.
This can be examined from Figure \ref{YU2ndregionsimulation}. We can see
that at $t=80,$ although the numerical solution and the soliton solution in (%
\ref{q=1/2 1soliton}) match already very well in the soliton region $(i)$ in
Figure \ref{solitonregionn}, the numerical solution and the asymptotic
solution in (\ref{oscillatory region (ii)}) have not matched well in regions
with large $x$ in region $(ii)$ in Figure \ref{YU2ndregionsimulation}. We
need, therefore, a longer time for example $t=200$ seen in Figure \ref%
{YU2ndregionsimulation} to guarantee a good match between the numerical and
the asymptotic solutions.
\begin{figure}[h]
\includegraphics[width=.9\textwidth]{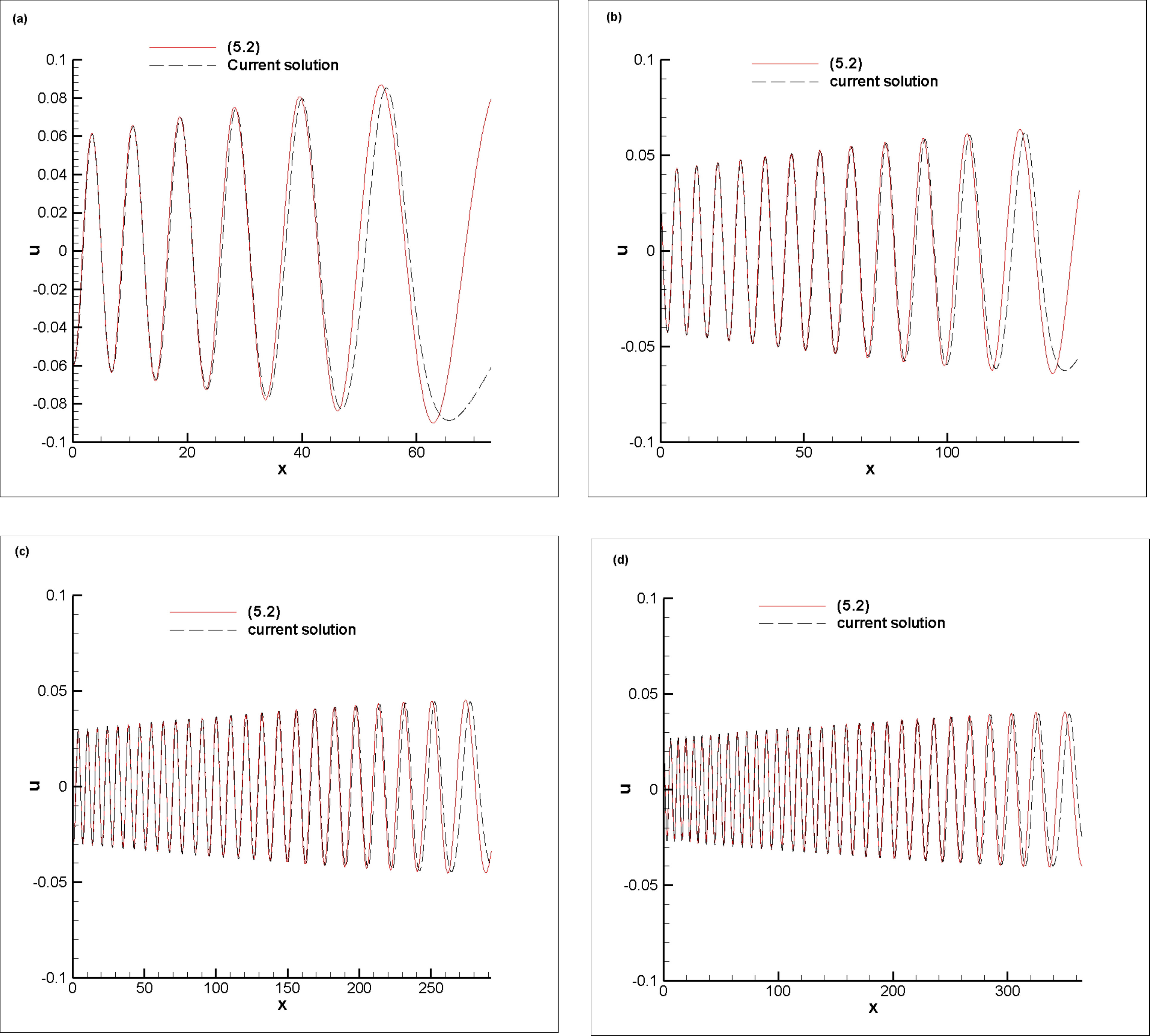}
\caption{{\protect \small Comparison of the predicted solution and the
asymptotic solution (\protect \ref{oscillatory region (ii)}) in the first
oscillatory region for }$\protect \varepsilon =\frac{14}{80}${\protect \small %
\ at (a) }$t=40;${\protect \small \ (b) }$t=80;${\protect \small \ (c) }$%
t=160; ${\protect \small \ (d) }$t=200.$}
\label{YU2ndregionsimulation}
\end{figure}

\subsubsection{Solution in region $(iii)$ with the sum of two decaying
modulated oscillations}

In the second oscillatory\ region $\frac{-1}{4}+\varepsilon <\zeta =\frac{x}{%
t}<0,$ $u\left( x,t\right) $ behaves like%
\begin{equation}
u\left( x,t\right) =\sum_{j=1}^{2}\frac{c_{1}^{\left( j\right) }}{\sqrt{t}}%
\sin \left( c_{2}^{\left( j\right) }t+c_{3}^{\left( j\right) }\log
t+c_{4}^{\left( j\right) }\right) +O\left( t^{-\alpha }\right) ,
\label{2nd asymp}
\end{equation}%
for any $\alpha \in \left( \frac{1}{2},1\right) $ provided $l\geq 5$, where $%
c_{n}^{\left( j\right) }$ ($n=1,...,4,$ $j=1,2$) are defined in the same way
as $c_{n}^{\left( 0\right) }$.

At $t=40,80,160,200,$ and $\varepsilon =\frac{14}{80}$, the slowly decaying
modulated oscillation region (C) shown in Figure \ref{fourregionYU} is known
to fall into $-3\leq x<0,-6\leq x<0,-12\leq x<0$ and $-15\leq x<0$,
respectively. The numerically predicted solution and the asymptotic solution
in (\ref{2nd asymp}) are plotted in Figure \ref{YU3rdregionsimulation}.
Similar to the solution in region $(ii)$, we can see from Figure \ref%
{YU3rdregionsimulation} that the numerical and asymptotic solutions become
well matched until $t=200$.
\begin{figure}[h]
\includegraphics[width=.9\textwidth]{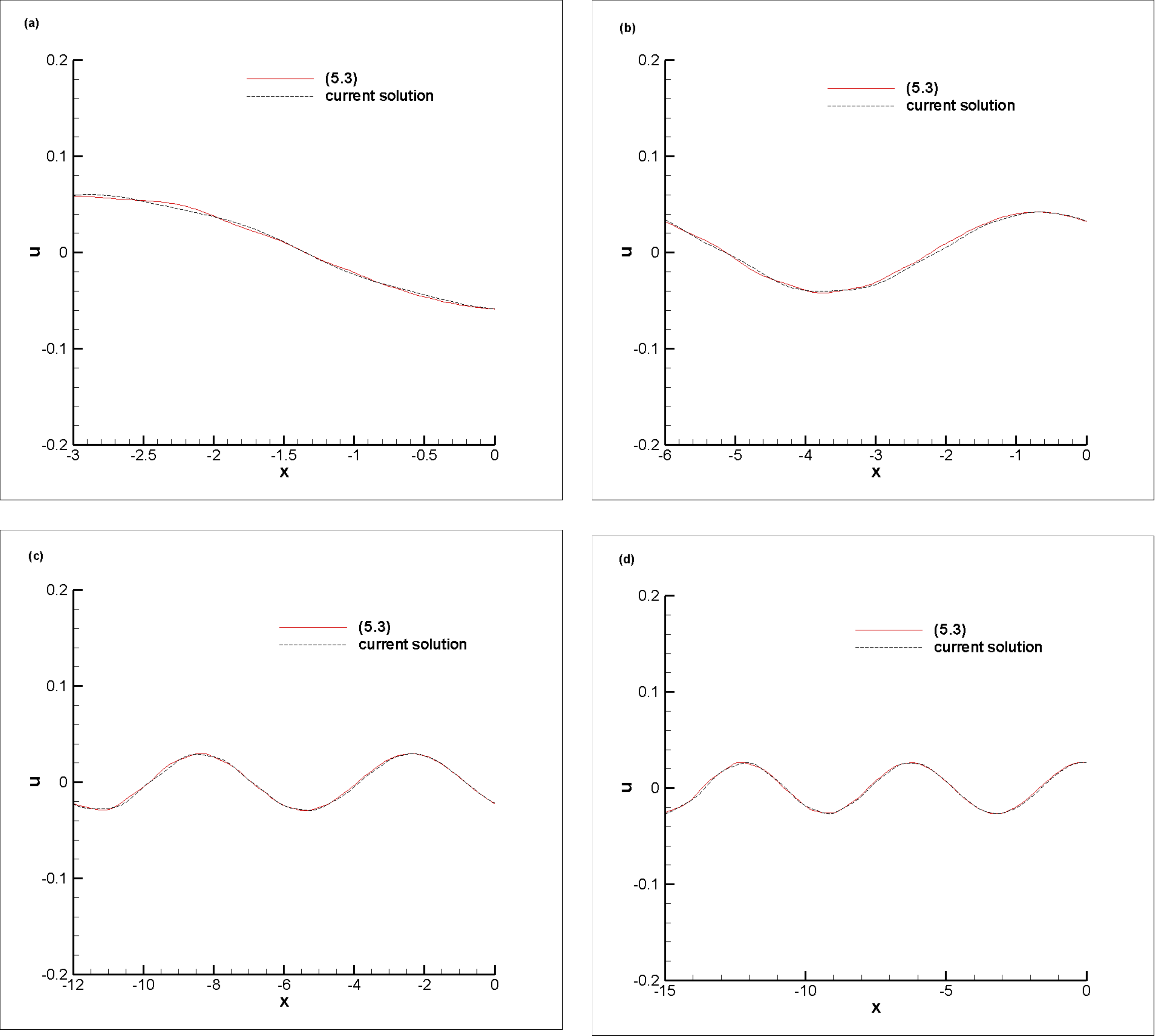}
\caption{{\protect \small Comparison of the currently predicted solution and
the asymptotic solution (\protect \ref{2nd asymp}) in the second oscillatory
region for }$\protect \varepsilon =\frac{14}{80}${\protect \small \ at (a) }$%
t=40;${\protect \small \ (b) }$t=80;${\protect \small \ (c) }$t=160;$%
{\protect \small \ (d) }$t=200.$}
\label{YU3rdregionsimulation}
\end{figure}

\subsubsection{Solution in the fast decaying region $(iv)$}

In the fast decaying region $\zeta =\frac{x}{t}<\frac{-1}{4}-\varepsilon ,$ $%
u\left( x,t\right) $ behaves like%
\begin{equation}
u\left( x,t\right) =O\left( t^{-l}\right) \text{ for any }l>0.
\label{fast decay asymp}
\end{equation}%
If we consider the case of $\varepsilon =\frac{14}{80},$ the fast decaying
region (D) schematically shown in Figure \ref{fourregionYU} at $%
t=40,80,160,200$ occurs in the regions of $x<-17,x<-34,x<-68,x<-85,$
respectively. The numerically predicted solution and the zero (exact)
solution are plotted in Figure \ref{iv}.

Although the decay order of error in (\ref{fast decay asymp}) is as large as
that in region $(i)$, by zooming the numerical and the asymptotic solution
profiles, we can find from Figure \ref{iv} the difference between the
regions $(i)$ and $(iv)$. At $t=40,$ the numerical solution is not very
close to zero for $x$ approaching the right end. Until $t=200,$ these two
solutions can match well with each other for the vertical solution axis
being scaled by $10^{-20}.$%
\begin{figure}[h]
\includegraphics[width=.9\textwidth]{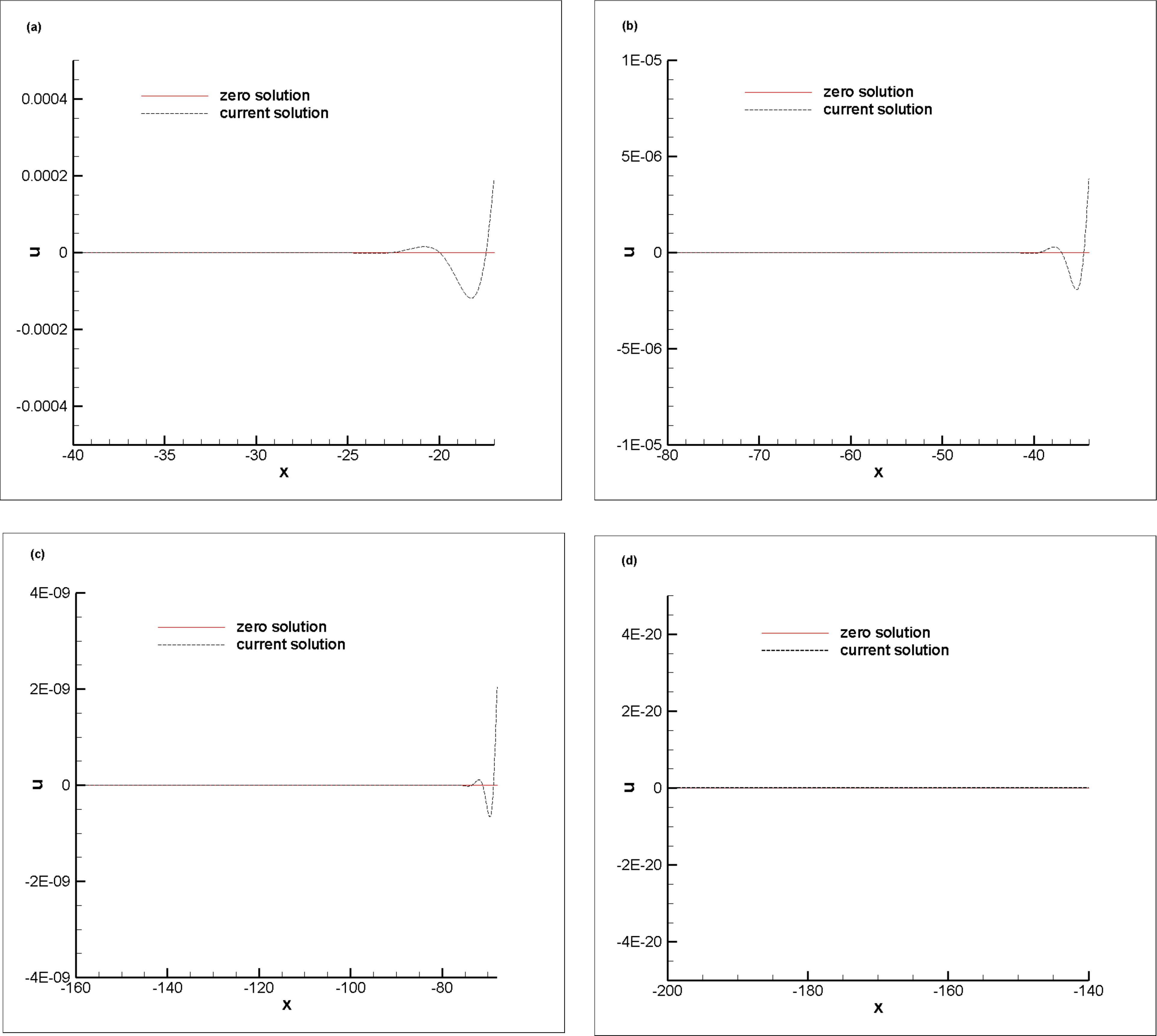}
\caption{{\protect \small Comparison of the predicted solution and the zero
(exact) solution in the fast decaying region for }$\protect \varepsilon =%
\frac{14}{80}${\protect \small \ at (a) }$t=40;${\protect \small \ (b) }$t=80;$%
{\protect \small \ (c) }$t=160;${\protect \small \ (d) }$t=200.$}
\label{iv}
\end{figure}

\subsection{Discussion of results in two Painlev\'{e} regions}

The Painlev\'{e} equation of type II is given as%
\begin{equation}
v^{\prime \prime }\left( s\right) =2v^{3}\left( s\right) +sv\left( s\right) .
\label{P2}
\end{equation}%
The one-parameter family of solutions to (\ref{P2}) was obtained firstly by
Ablowitz and Segur \cite{bib:Ablowitz(1977)} and then by Hastings and Mcleod
\cite{HasMcl1980} for $s\in
%TCIMACRO{\U{211d} }%
%BeginExpansion
\mathbb{R}
%EndExpansion
.$ They considered the solutions of (\ref{P2}) as the solutions of a linear
integral equation of the Gel'fand-Levitan type with a given $r\in
%TCIMACRO{\U{211d} }%
%BeginExpansion
\mathbb{R}
%EndExpansion
$%
\begin{equation*}
K\left( s,\eta \right) =r\text{Ai}\left( \frac{s+\eta }{2}\right) +\frac{%
r^{2}}{4}\int_{s}^{\infty }\int_{s}^{\infty }K\left( s,\bar{\eta}\right)
\text{Ai}\left( \frac{\bar{\eta}+\tilde{\eta}}{2}\right) \text{Ai}\left(
\frac{\tilde{\eta}+\eta }{2}\right) d\bar{\eta}d\tilde{\eta}\text{.}
\end{equation*}%
In the above, Ai$\left( s\right) $ is the Airy function. Then $v\left(
s\right) \left( =v\left( s;r\right) =K\left( s,s\right) \right) $ satisfies (%
\ref{P2}) subject to the specified boundary condition $v\left( s;r\right)
\sim r$Ai$\left( s\right) $ as $s\rightarrow \infty ,$ $s\in
%TCIMACRO{\U{211d} }%
%BeginExpansion
\mathbb{R}
%EndExpansion
$ (see also \cite{bib:Clarkson}).

\subsubsection{Solution in the first transition region (T1)}

From \cite{bib:Monvel(2010)}, for $\left \vert \frac{x}{t}-2\right \vert t^{%
\frac{2}{3}}<C,$ $u\left( x,t\right) $ behaves like%
\begin{equation}
u\left( x,t\right) =-\left( \frac{4}{3}\right) ^{\frac{2}{3}}\frac{1}{t^{%
\frac{2}{3}}}\left( v^{2}\left( s\right) -v^{\prime }\left( s\right) \right)
+O\left( t^{-1}\right) ,  \label{Painleve 1st reg}
\end{equation}%
where $s=6^{\frac{-1}{3}}\left( \frac{x}{t}-2\right) t^{\frac{2}{3}},$ and $%
v=v\left( s\right) =v\left( s;-R\left( 0\right) \right) $ is the real
valued, non-singular solution of (\ref{P2}) fixed by $v\left( s\right) \sim
-R\left( 0\right) $Ai$\left( s\right) $ as $s\rightarrow \infty $.\smallskip

In the numerical computation of the Painlev\'{e} equation of type II, we use
the connection formulas derived by \cite{ClarksonMcleod,HasMcl1980}. Since $%
R\left( 0\right) =-1$ from the equation (\ref{scatteringdata}), by the
Hastings and Mcleod's result \cite{HasMcl1980} the $P_{II}$ solution of
equation (\ref{P2}) sought subject to the boundary condition $v\left(
s\right) \sim -R\left( 0\right) Ai\left( s\right) =Ai\left( s\right) $ is
unique. Moreover, the asymptotic solution for $v$ can be expressed as $%
v\left( s\right) \sim \sqrt{\frac{-1}{2}s}$ as $s\rightarrow -\infty $.

By choosing a large interval, say $\left( s_{L},s_{R}\right) $ with $%
s_{L}s_{R}<0,$ the solution of (\ref{P2}) sought subject to the boundary
condition $v\left( s_{L}\right) =\sqrt{\frac{-1}{2}s_{L}},$ $v\left(
s_{R}\right) =Ai\left( s_{R}\right) =\frac{1}{2\sqrt{\pi }}\left(
s_{R}\right) ^{\frac{-1}{4}}e^{\frac{-2}{3}\left( s_{R}\right) ^{\frac{3}{2}%
}}\ $can be obtained. This solution is called as $v_{2}\left( s\right) .$

If the case with $C=\frac{14}{80}$ is considered, the first transition
region (T1) schematically shown in Figure \ref{fourregionYU} at $t=80$ and $%
160$ is defined in the regions of $159.25<x<160.75,$ $320.95<x<319.05,$
respectively. The numerically predicted solution and the leading term of the
asymptotic solution shown in (\ref{Painleve 1st reg}) with $v=v_{2}\left(
s\right) $ are plotted in Figure \ref{1stPainleve}.

We can then compare the leading term of the asymptotic solution (\ref%
{Painleve 1st reg}) with $v=v_{2}\left( s\right) $ and $u\left( x,t\right) $
at, for example, $t=40.$ This corresponds to compare the finite difference
solution $u\left( x,40\right) $ and the following term%
\begin{equation}
-\left( \frac{4}{3}\right) ^{\frac{2}{3}}\frac{1}{t^{\frac{2}{3}}}\left(
v_{2}^{2}\left( s\right) -v_{2}^{\prime }\left( s\right) \right) .
\label{P2 asymp in 1st p region}
\end{equation}%
Figure \ref{1stPainleve} exhibits a perfect agreement between the two graphs
for the numerical solution and the asymptotic solution (\ref{P2 asymp in 1st
p region}) expressed in terms of $v_{2}\left( s\right) $ at $t=160.$%
\begin{figure}[h]
\includegraphics[width=.9\textwidth]{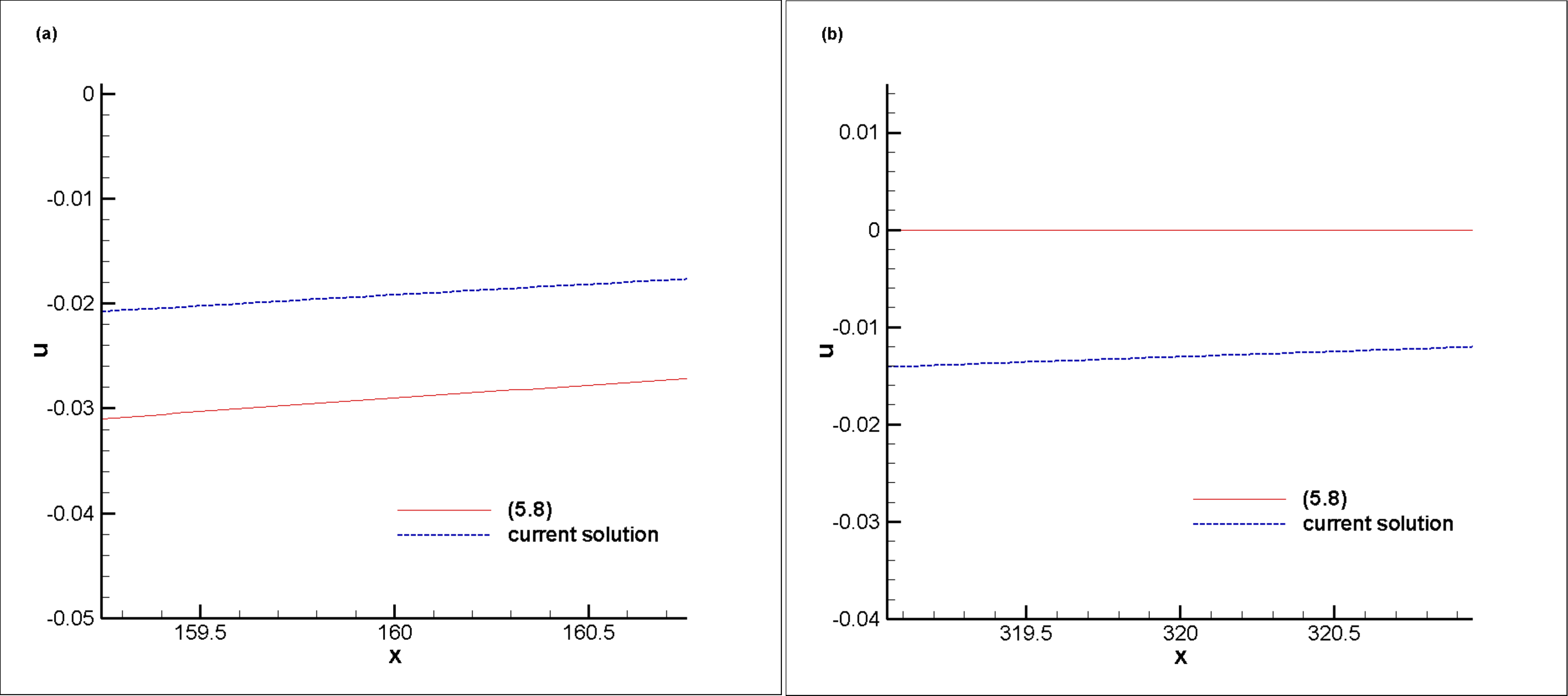}
\caption{{\protect \small Comparison of the predicted solution }$u\left(
x,t\right) ${\protect \small \ and the leading term of the Painlev\'{e}
solution (\protect \ref{Painleve 1st reg}) at (a) }$t=80${\protect \small ;
(b) }$t=160.$}
\label{1stPainleve}
\end{figure}

\subsubsection{Solution in the second transition region (T2)}

From \cite{bib:Monvel(2010)}, for $\left \vert \frac{x}{t}+\frac{1}{4}%
\right
\vert t^{\frac{2}{3}}<C,$ $u\left( x,t\right) $ has been derived as%
\begin{equation}
u\left( x,t\right) =\frac{12^{\frac{1}{6}}}{t^{\frac{1}{3}}}v_{1}\left(
s_{1}\right) \sin \psi \left( s_{1},t\right) +O\left( t^{-\frac{2}{3}%
}\right) ,  \label{Painleve 2nd reg}
\end{equation}%
where $s_{1}=-\left( \frac{16}{3}\right) ^{\frac{1}{3}}\left( \frac{x}{t}+%
\frac{1}{4}\right) t^{\frac{2}{3}}$ and $v_{1}\left( s\right) =v_{1}\left(
s;\left \vert R\left( \frac{\sqrt{3}}{2}\right) \right \vert \right) $ is
the real valued, non-singular solution of (\ref{P2}) fixed by $v_{1}\left(
s\right) \sim \left \vert R\left( \frac{\sqrt{3}}{2}\right) \right \vert
Ai\left( s\right) $ as $s\rightarrow \infty $. Other notations shown in (\ref%
{Painleve 2nd reg}) and their $q_{0}$-dependence can be seen in the Appendix
A.

Since $\left \vert R\left( \frac{\sqrt{3}}{2}\right) \right \vert <1$, by
the Clarkson and Mcleod's result \cite{ClarksonMcleod} the solution of the $%
P_{II}$ equation (\ref{P2}) obtained under the boundary condition $v\left(
s\right) \sim \left \vert R\left( \frac{\sqrt{3}}{2}\right) \right \vert $Ai$%
\left( s\right) $ is unique. Moreover, the corresponding asymptotic behavior
for $v$ as $s\rightarrow -\infty $ can be expressed as%
\begin{equation}
v\left( s\right) \sim d\left \vert s\right \vert ^{\frac{-1}{4}}\sin \left
\{ \frac{2}{3}\left \vert s\right \vert ^{\frac{3}{2}}-\frac{3}{4}d^{2}\ln
\left \vert s\right \vert -\theta _{0}\right \} \text{ as }s\rightarrow
-\infty ,  \label{Clarkson Mcleod formula}
\end{equation}%
where $d$ and $\theta _{0}$ can be seen in \cite{ClarksonMcleod} and their $%
q_{0}$-dependence can be seen in the Appendix A.

By choosing a large interval, say $\left( \hat{s}_{L},\hat{s}_{R}\right) $
with $\hat{s}_{L}\hat{s}_{R}<0,$ the solution of (\ref{P2}) with the
boundary condition $v\left( \hat{s}_{L}\right) =B_{-}\left( \hat{s}%
_{L}\right) ,$ $v\left( \hat{s}_{R}\right) =B_{+}\left( \hat{s}_{R}\right) $
(see (\ref{connection formula}) in the Appendix A) can be obtained. We call
this solution as $v_{3}\left( s\right) .$

At $C=\frac{14}{80}$, the second transition region (T2) schematically shown
in Figure \ref{fourregionYU} at $t=80$ and $160$ falls in the regions of $%
-20.754<x<-19.246$ and $-40.95<x<-39.05,$ respectively. The numerically
predicted solution and the leading term of the asymptotic solution taking
the form of (\ref{Painleve 2nd reg}) with $v=v_{3}\left( s\right) $ are
plotted in Figure \ref{p2}.

Based on the similar argument to the one presented in the first Painlev\'{e}
region, the predicted solution $u\left( x,t\right) $ can be now compared
with the leading term (\ref{Painleve 2nd reg}) of the Painlev\'{e} II
solution%
\begin{equation}
\frac{12^{\frac{1}{6}}}{t^{\frac{1}{3}}}v_{3}\left( s_{1}\right) \sin \psi
\left( s_{1},t\right) \text{.}  \label{P2 asymp in 1st p region_}
\end{equation}%
In Figure \ref{p2}, at $t=160$ we can see a good agreement between the two
graphs of the numerical solution and the asymptotic solution (\ref{P2 asymp
in 1st p region_}) expressed in terms of $v_{3}\left( s\right) $.

\begin{figure}[h]
\includegraphics[width=.9\textwidth]{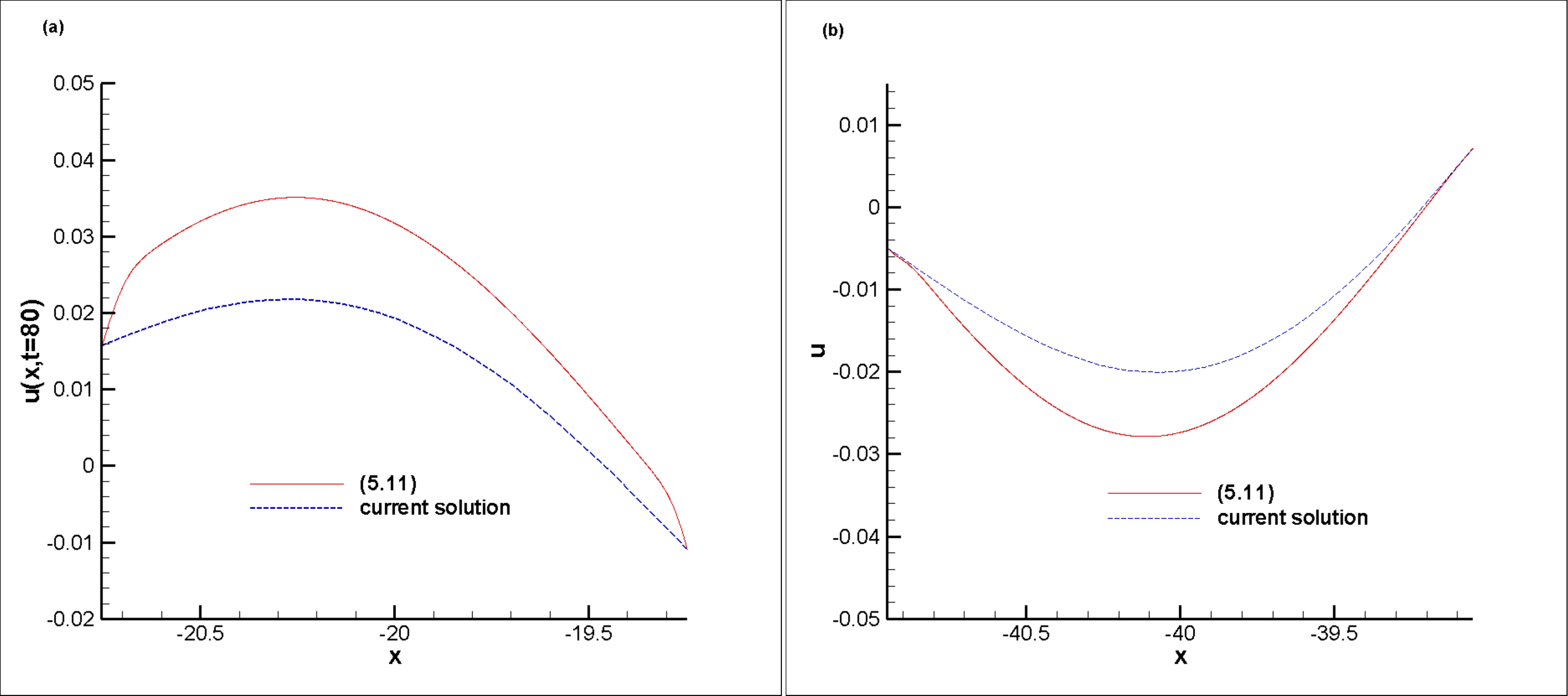}
\caption{{\protect \small Comparison of the predicted solution }$u\left(
x,t\right) ${\protect \small \ and the leading term of the Painlev\'{e} II
solution (\protect \ref{Painleve 2nd reg}), or }$v_{3}\left( s\right) ,$%
{\protect \small \ at (a) }$t=80${\protect \small , (b) }$t=160.$}
\label{p2}
\end{figure}

\subsection{The $\protect \varepsilon -$ and $C-$ dependence in the
asymptotic regions}

Both $\varepsilon $ and $C$ appeared in the beginning of this section can
characterize the regions of different solution behaviors. In the $\left(
x,t>0\right) $ plane, the $\varepsilon $ appeared in the four regions $(i)$
- $(iv)$ characterizes the degree of the departure from the two lines $\frac{%
x}{t}=2$ and $\frac{x}{t}=\frac{-1}{4}$. The constant $C$ present in the two
transition regions characterizes the degree of departure from the lines $%
\frac{x}{t}=2$ and $\frac{x}{t}=\frac{-1}{4}.$

In Figure \ref{eplison_C} we plot the variance of the asymptotic regions for
$C=\frac{14}{80}$ and $\varepsilon =\frac{10}{80},\frac{14}{80}$ and $\frac{%
18}{80}$, respectively. From the result in \cite{MKST} (see the Appendix A),
let us take the regions $(iii)$ and $(iv)$ as an example for the discussion
of results in Figure \ref{eplison_C}. For $C$ being fixed at $\frac{14}{80}$%
, given a large enough $\varepsilon $, the regions $(iii)$ $\frac{-1}{4}%
+\varepsilon <\frac{x}{t}<0$ and $(iv)$ $\frac{x}{t}<\frac{-1}{4}%
-\varepsilon $ will exhibit oscillatory asymptotics. However, as $%
\varepsilon $ becomes smaller, regions $(iii)$ and $(iv)$ become larger, and
at the same time regions $(iii)$ and $(iv)$ are closer to the second
transition region $\left \vert \frac{x}{t}+\frac{1}{4}\right \vert t^{\frac{2%
}{3}}<C$. As long as $t$ is sufficiently large, the condition $Ct^{\frac{-2}{%
3}}<\varepsilon $ can be satisfied no matter what the chosen values of $%
\varepsilon $ and $C$ are. That is, the transition region (T2) will not
affect the regions $(iii)$ and $(iv)$ for $t$ being larger enough for any
choice of $\varepsilon $ and $C.$ One should notice that in figure (c) of
Figure \ref{eplison_C}, the region with white color between the region $%
(iii) $ and the region (T2) can not be considered as a region belonging to
neither the region $(iii)$ nor the region (T2). In fact, this white region
should be regarded as the region $(iii)$ with $\varepsilon $ becoming
greater than $\frac{18}{80}.$ Other white regions in different places can be
argued by a similar way.

\begin{figure}[h]
\includegraphics[width=.8\textwidth]{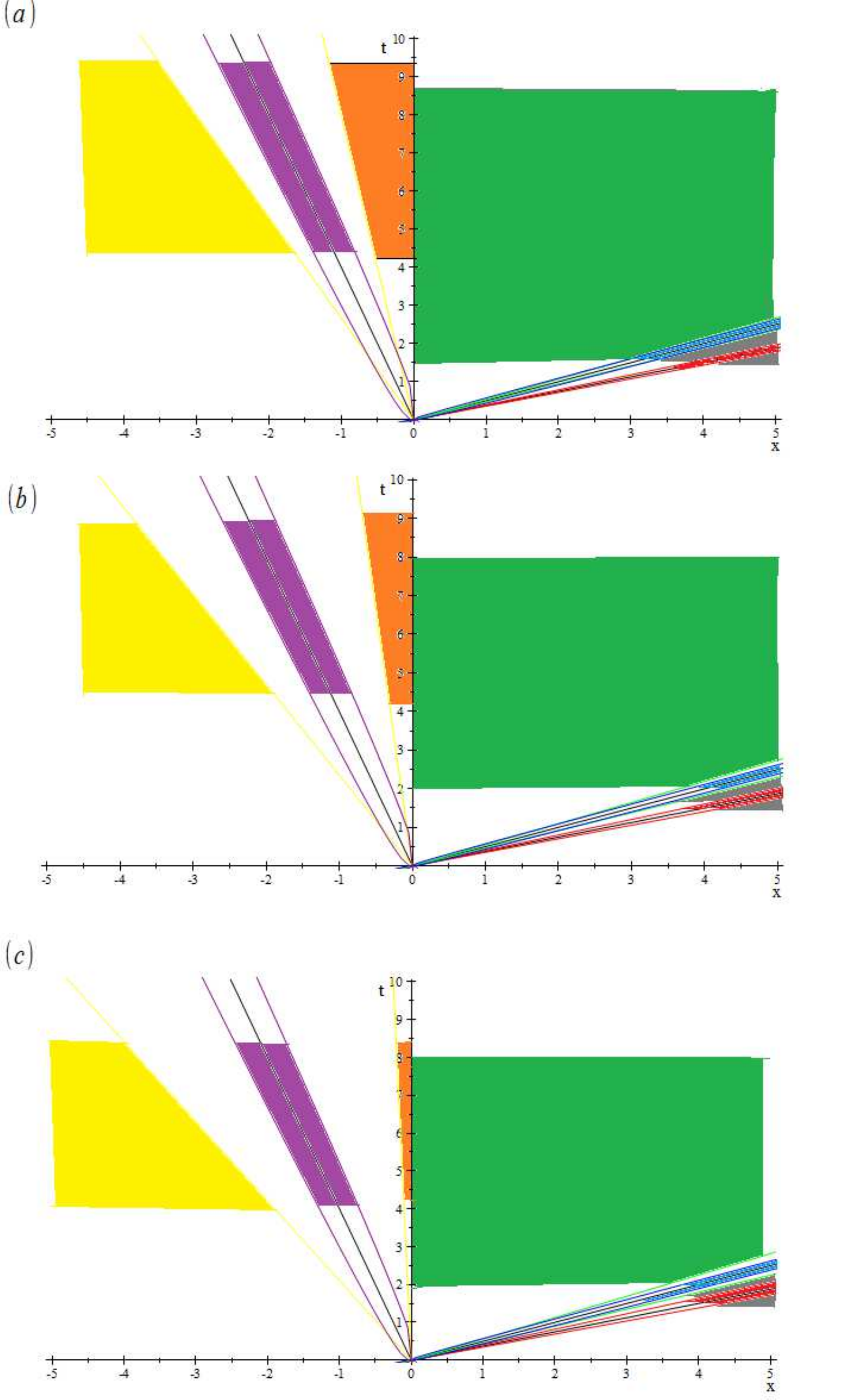}
\caption{{\protect \small Comparison of the regions of long-time asymptotics
obtained at }$C=\frac{14}{80}${\protect \small \ and (a) }$\protect%
\varepsilon =\frac{10}{80};${\protect \small \ (b)}$\frac{14}{80};$%
{\protect \small \ (c) }$\frac{18}{80}.${\protect \small \ The first (resp.
second) transition region is colored with blue (resp. purple).}}
\label{eplison_C}
\end{figure}

\subsection{Discussion of the time-decay estimates \label{(Q1)(Q2)}}

In questions (Q1) and (Q2), we are asked ourselves how to determine the time
that is long enough that the predicted solution has matched with asymptotic
solution and how to obtain the powers of $t$ in the time-decay estimates
under the specified initial data (\ref{ini15}). In this section, we are
aimed to answer these two questions in the following way. Let $u_{j}\left(
x,t\right) $ with $j=$1,...,4, 5 and 6 be the asymptotic solutions in
regions $(i)$,...,$(iv)$ and in the first and the second transition regions,
respectively. Let $I_{j}$ with $j=$1,...,4, 5 and 6 denoting the regions $%
(i) $,..., $(iv)$ and the first and the second transition regions,
respectively.

In Subsection \ref{(Q1)} we find the time $T_{j}$ in each $I_{j}$ such that
the difference between the numerical solution $u_{\text{num}}\left(
x,t\right) $ and the asymptotic solution $u_{j}\left( x,t\right) $ becomes
small enough for $t\geq T_{j}.$ Here "small enough" between $u_{\text{num}%
}\left( x,t\right) $ and $u_{j}\left( x,t\right) $ solutions is measured by
considering the $L^{2}$ norm difference between $u_{\text{num}}\left(
x,t\right) $ and $u_{j}\left( x,t\right) $ in each region $I_{j}:$%
\begin{equation*}
E_{j}^{L2}\left( t\right) :=\left( \int_{I_{j}}\left \vert u_{\text{num}%
}\left( x,t\right) -u_{j}\left( x,t\right) \right \vert ^{2}dx\right) ^{%
\frac{1}{2}}
\end{equation*}%
for $j=$1,...,4, 5 and 6. We find the time $T_{j}$ in each $I_{j}$ such that
$E_{j}^{L2}\left( t\right) $ becomes small and there is no significant
variation any longer for $t\geq T_{j}.$ Then the time needed for the
computed solutions to reach the asymptotics can be found in each region $%
I_{j}$.

In Subsection \ref{(Q2)}, the decay order theoretically derived from \cite%
{MKST} will be calculated in a numerical way. Let $E_{j}^{\sup }\left(
t\right) :=\max_{x\in I_{j}}\left \vert u_{\text{num}}\left( x,t\right)
-u_{j}\left( x,t\right) \right \vert .$ Based on the point-wise sense of the
asymptotic formulas (\ref{soliton region error}), (\ref{oscillatory region
(ii)}), (\ref{2nd asymp}) and (\ref{fast decay asymp}) in \cite{MKST}, the
function space considered for (Q2) is different from that in Subsection \ref%
{(Q1)}. The exact power of $t$ in these asymptotic formulas remains unknown
in \cite{MKST}. The above mentioned question (Q2) concerns the way of
computing the value of the power of $t.$

In Subsection \ref{eplison influence rate}, we discuss the influence of the
range considered in each region on the computed decay rates. The result
shows that if a larger region is considered, the time needed to reach the
long-time asymptotics will be longer accordingly. That is, the computed
decay rates will become smaller.

\subsubsection{The predicted long time in each region \label{(Q1)}}

We find the time needed to reach the asymptotics by computing $%
E_{j}^{L2}\left( t\right) $ in each $I_{j}$ ($j=2$, $3$, $4$).
\begin{figure}[tbh]
\includegraphics[width=.9\textwidth]{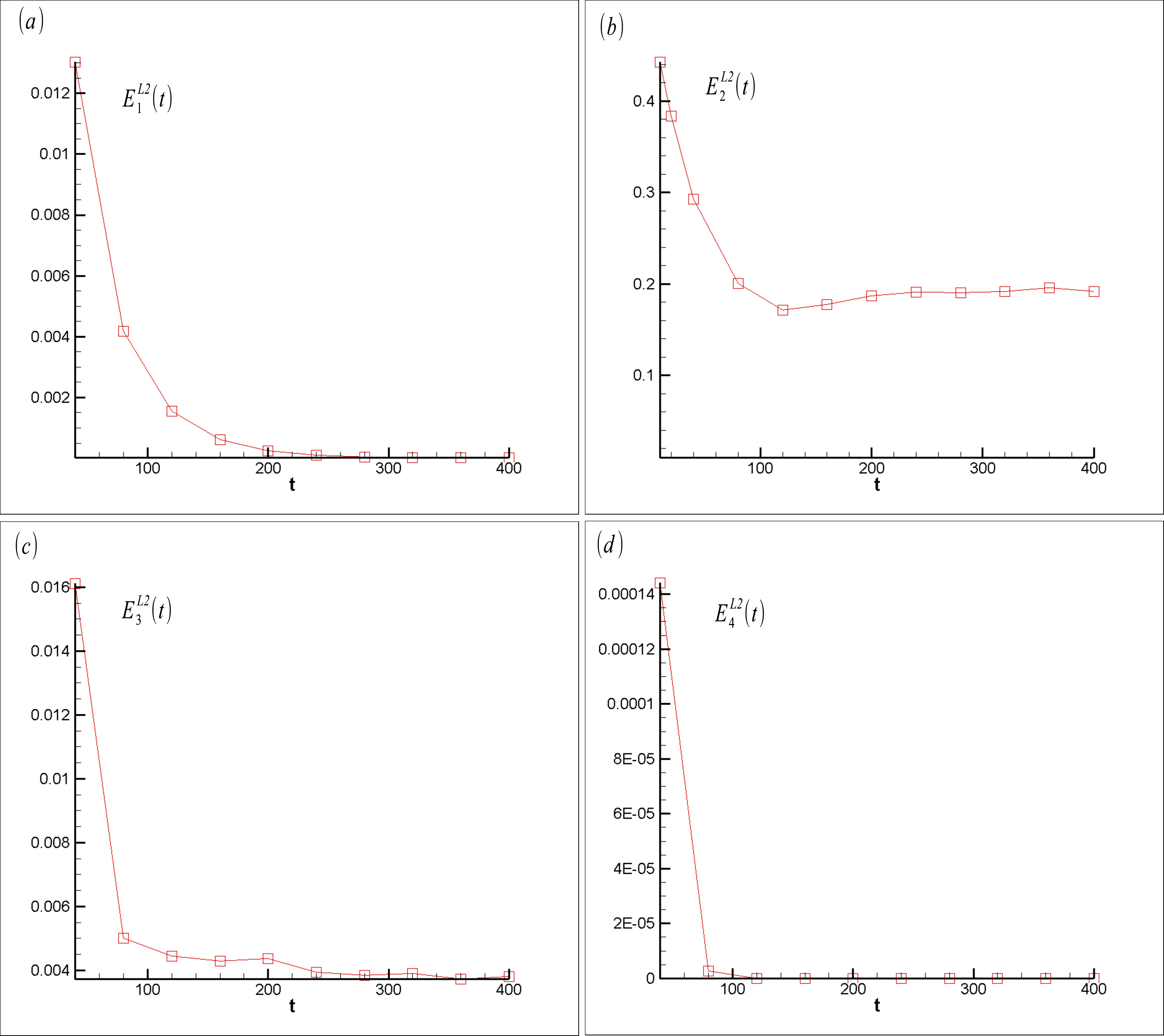}
\caption{{\protect \small The predicted }$L^{2}${\protect \small \ norm
differences between the numerical and long-time asymptotic solutions are
plotted with respect to time in the regions (a) }$(i);${\protect \small \ (b)
}$(ii);${\protect \small \ (c) }$(iii);${\protect \small \ (d) }$(iv)$%
{\protect \small \ for }$\protect \varepsilon =\frac{14}{80}.$}
\label{L24regions}
\end{figure}
Figure \ref{L24regions} plots the time-decay profiles of $E_{j}^{L2}\left(
t\right) $ in regions $(i)$-$(iv)$ at different times with $\varepsilon =%
\frac{14}{80}$. In region $(iii)$, for a large enough $t$, the estimate $%
E_{3}^{L2}\left( t\right) $ becomes sufficiently small. In regions $(i)$ and
$\left( iv\right) $, the decay orders are much larger than those in regions $%
(ii)$ and $(iii)$ from these graphs. Moreover, the variances of $%
E_{j}^{L2}\left( t\right) $ ($j=1,$ $2$, $3$, $4$) are tabulated in Table %
\ref{L2 i}.

\begin{table}[tbh]
{\small
\begin{equation*}
\begin{tabular}{|l|l|l|l|l|}
\hline
$t$ & $E_{1}^{L2}(t)$ & $E_{2}^{L2}\left( t\right) $ & $E_{3}^{L2}\left(
t\right) $ & $E_{4}^{L2}\left( t\right) $ \\ \hline
$40$ & 1.303304031375658E-2 & 0.292954164986533 & 1.612163792174788E-2 &
1.442232517777524E-4 \\ \hline
$80$ & 4.170394908225294E-3 & 0.200463612016154 & 5.009588123729806E-3 &
2.575958816177057E-6 \\ \hline
$120$ & 1.557160720783265E-3 & 0.171595918602204 & 4.443772185385986E-3 &
5.457070673560806E-8 \\ \hline
$160$ & 6.124634754925668E-4 & 0.177361156697756 & 4.289889498025333E-3 &
1.163202136797550E-9 \\ \hline
$200$ & 2.498850932347651E-4 & 0.187198259978890 & 4.372113314984243E-3 &
2.755050865021234E-11 \\ \hline
$240$ & 1.060794858467605E-4 & 0.191131350654329 & 3.944857443413838E-3 &
6.595925065534878E-13 \\ \hline
$280$ & 5.032093370750756E-5 & 0.190235716309735 & 3.837028523022252E-3 &
1.572983122931371E-14 \\ \hline
$320$ & 3.113956434979408E-5 & 0.191477176087681 & 3.890293609233680E-3 &
3.709034000389790E-16 \\ \hline
$360$ & 2.655384864147633E-5 & 0.195600653012653 & 3.727629679179258E-3 &
8.614729075722736E-18 \\ \hline
$400$ & 2.575663036324579E-5 & 0.191706872966999 & 3.802122218224038E-3 &
1.966797275673494E-19 \\ \hline
\end{tabular}%
\end{equation*}%
}
\caption{{\protect \small The differences }$E_{j}^{L2}\left( t\right) $%
{\protect \small \ (}$j=1,2,3,4${\protect \small ) obtained in the regions }$%
(i)-(iv)${\protect \small \ at different times for }$\protect \varepsilon =%
\frac{14}{80}${\protect \small .}}
\label{L2 i}
\end{table}

Motivated by the fact given in \cite{MKST} that the decay orders in regions $%
(ii)$ and $(iii)$ are smaller than those in regions $(i)$ and $(iv),$ we
focus on the closeness between the numerical and the asymptotic solutions in
regions $(ii)$ and $(iii).$ That is, in a shorter time, the analytical
solutions already match the asymptotic solutions very well in regions $(i)$
and $(iv).$ Therefore $\min \left \{ T_{2},T_{3}\right \} >\max \left \{
T_{1},T_{4}\right \} $ and the long time $T:=\max_{1\leq j\leq 4}T_{j}$
stated above needs to be found by examining $T_{2}$ and $T_{3}$ in regions $%
(ii)$ and $(iii)$, respectively.

In Table \ref{L2 i} and Figure \ref{L24regions}, we take $T_{1}=160,$ $%
T_{3}=240$ and $T_{4}=40$ as the respective times at which the numerical
solutions match very well with the asymptotic solutions in regions $(i),$ $%
(iii)$ and $(iv)$. However, $T_{2}$ can not determined easily from the graph
in Figure \ref{L24regions}. We choose $T_{2}=280$ and the reason will be
explained in the next section.

\subsubsection{The numerically computed decay powers of $t$\label{(Q2)}}

In this subsection we consider the powers of $t$ of the time-decay estimates
in \cite{MKST}.
\begin{figure}[tbh]
\includegraphics[width=.9\textwidth]{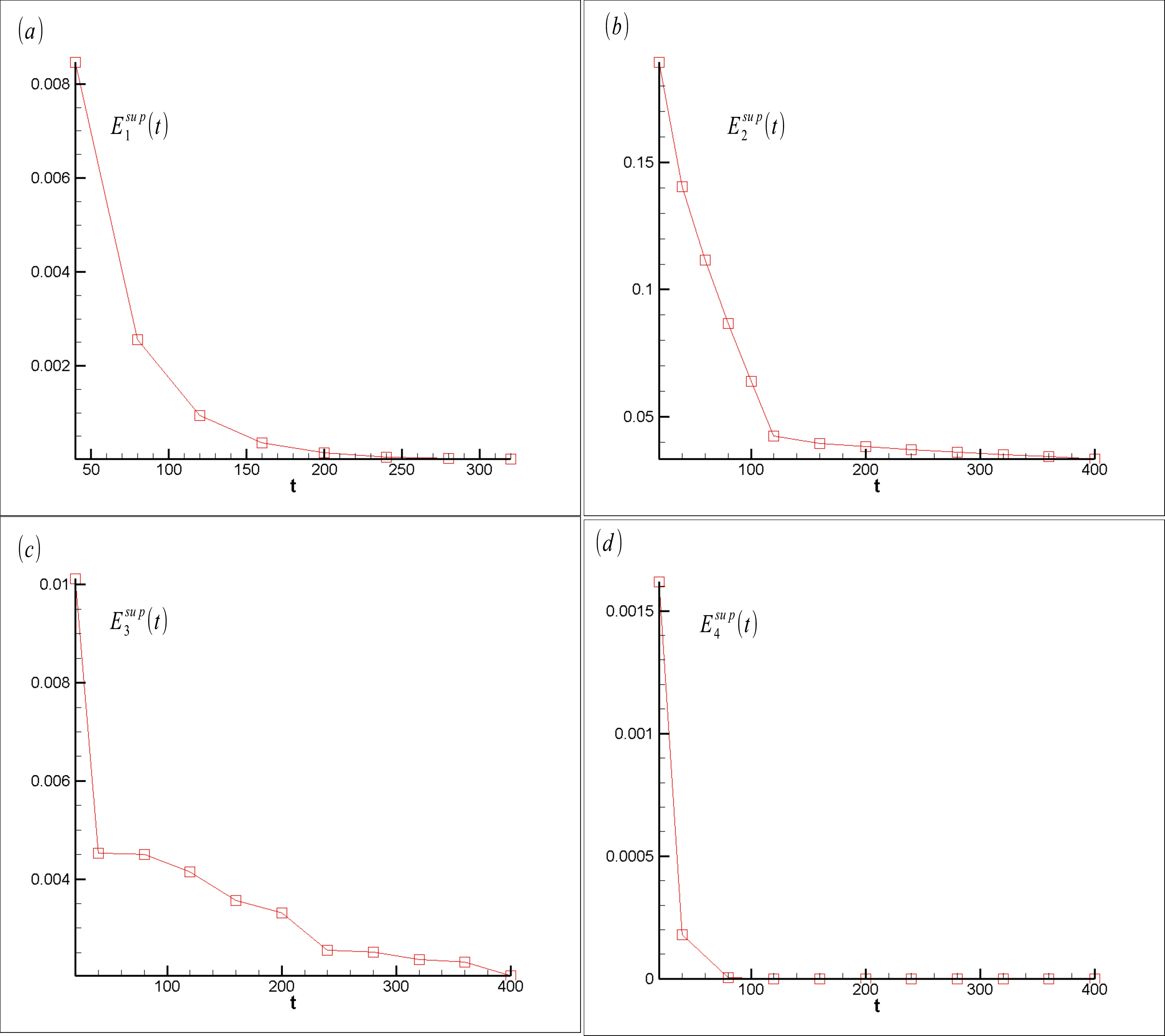}
\caption{{\protect \small The predicted sup norm differences between the
numerical and long-time asymptotic solutions are plotted with respect to
time in the regions (a) }$(i)${\protect \small \ (b) }$(ii);${\protect \small %
\ (c) }$(iii);${\protect \small \ (d) }$(iv)${\protect \small \ for }$\protect%
\varepsilon =\frac{14}{80}.$}
\label{Esupp}
\end{figure}
Figure \ref{Esupp} plots the time-decay profiles of the sup norm $%
E_{j}^{\sup }\left( t\right) $ in regions $(i)$-$(iv)$, respectively, at
different times with $\varepsilon =\frac{14}{80}$. Similar to the $L^{2}$
norm results, in regions $(ii)$ and $(iii)$ the estimates $E_{2}^{\sup
}\left( t\right) $ and $E_{3}^{\sup }\left( t\right) $ become small enough
for a large enough $t$. The decay orders in regions $(i)$ and $\left(
iv\right) $ look much larger than those in regions $(ii)$ and $(iii)$.
Moreover, the variances of $E_{j}^{\sup }\left( t\right) $ ($j=1,$ $2$, $3$,
$4$) are tabulated in Table \ref{sup i}.

\begin{table}[tbh]
{\small
\begin{equation*}
\begin{tabular}{|l|l|l|l|l|}
\hline
$t$ & $E_{1}^{\sup }(t)$ & $E_{2}^{\sup }\left( t\right) $ & $E_{3}^{\sup
}\left( t\right) $ & $E_{4}^{\sup }\left( t\right) $ \\ \hline
$40$ & 8.468787040579530E-3 & 1.40463349496939E-1 & 4.526819459939539E-2 &
1.802887487030715E-4 \\ \hline
$80$ & 2.562851596378625E-3 & 8.675408665040811E-2 & 4.496098688352251E-3 &
3.742290026355494E-6 \\ \hline
$120$ & 9.354531544216522E-4 & 4.260921144987524E-2 & 4.144216057718784E-3 &
8.745510811409106E-8 \\ \hline
$160$ & 3.636038798000425E-4 & 3.965394156144474E-2 & 3.566372610309353E-3 &
1.911307301562621E-9 \\ \hline
$200$ & 1.467776857832571E-4 & 3.840181001023902E-2 & 3.306124437020473E-3 &
4.524112000962450E-11 \\ \hline
$240$ & 6.063916391485836E-5 & 3.724938756956186E-2 & 2.548191267527537E-3 &
1.048626564931299E-12 \\ \hline
$280$ & 2.565702741164118E-5 & 3.619659640699480E-2 & 2.513406195082118E-3 &
2.373955192058340E-14 \\ \hline
$320$ & 1.755075330411859E-5 & 3.523470929795400E-2 & 2.352826688157257E-3 &
5.235427733097183E-16 \\ \hline
$360$ &  & 3.435271074529302E-2 & 2.311665267575798E-3 &
1.120502174844415E-17 \\ \hline
$400$ &  & 3.354081711805757E-2 & 2.030974658791331E-3 &
2.318829978373335E-19 \\ \hline
\end{tabular}%
\end{equation*}%
}
\caption{{\protect \small The differences }$E_{j}^{\sup }\left( t\right) $%
{\protect \small \ (}$j=1,2,3,4${\protect \small ) obtained in the regions }$%
(i)-(iv)${\protect \small \ at different times for }$\protect \varepsilon =%
\frac{14}{80}${\protect \small .}}
\label{sup i}
\end{table}

In the following the question (Q2) will be answered by two means. In other
words, the decay order of the estimate $E_{j}^{\sup }\left( t\right) $ will
be found numerically under two different error estimates:

(1) The rates of convergence (R.O.C.) in regions $(ii)$-$(iv)\ $are
computed. The results are summarized in Table \ref{ROC}.
\begin{table}[h]
\begin{equation*}
\begin{tabular}{|l||l||l|}
\hline
region $(ii)$ & region $(iii)$ & region $(iv)$ \\ \hline
0.7998085 & 0.5159832 & 13.1137 \\ \hline
\end{tabular}%
\end{equation*}%
\caption{{\protect \small The computed rates of convergence in the regions }$%
(ii)${\protect \small , }$(iii)${\protect \small \ and }$(iv)${\protect \small .%
}}
\label{ROC}
\end{table}

(2) In \cite{MKST}, the decay orders in the regions $\left( i\right) $ and $%
(iv)$ are $O\left( t^{-l}\right) \ $while $O\left( t^{-\alpha }\right) $ in
the regions $(ii)$ and $(iii)$ for any $\alpha \in \left( \frac{1}{2}%
,1\right) $ provided $l\geq 5$ (see (\ref{soliton region error}), (\ref%
{oscillatory region (ii)}), (\ref{2nd asymp}) and (\ref{fast decay asymp})).
Therefore there exist four different constants $C_{i}$ such that%
\begin{equation}
\begin{tabular}{l}
$E_{j}^{\sup }\left( t\right) \leq C_{j}t^{-l_{j}}$ for $j=1,4;\smallskip $
\\
$E_{j}^{\sup }\left( t\right) \leq C_{j}t^{-\alpha _{j}}$ for $j=2,3.$%
\end{tabular}
\label{estimates in mkst}
\end{equation}%
Note that $C_{j}$ $(j=1,...,4)$ are independent of $t$. In the following we
numerically calculate the decay orders in the regions $(i)$-$(iv)$.To this
end, we first define the numerically computed decay powers in the regions $%
(i)$-$(iv).$

\begin{definition}
\label{NDR}Let $T=\max \left \{ T_{1},...,T_{4}\right \} $ be the maximum of
the long times $T_{1},...,T_{4}$ found in question (Q1). The numerical decay
powers $L_{1},$ $L_{4}$, $A_{2}$ and $A_{3}$ are defined to be the values
such that%
\begin{equation*}
E_{j}^{\sup }\left( T\right) =T^{-L_{j}},\text{ }j=1,4;\text{ \ }E_{j}^{\sup
}\left( T\right) =T^{-A_{j}},\text{ }j=2,3.
\end{equation*}%
That is,%
\begin{equation*}
L_{1,4}=-\frac{\log E_{1,4}^{\sup }\left( T\right) }{\log T},\ A_{2,3}=-%
\frac{\log E_{2,3}^{\sup }\left( T\right) }{\log T}.
\end{equation*}%
The values $L_{1},$ $L_{4}$, $A_{2}$ and $A_{3}$ are considered to be the
powers of $t$ of the time-decay estimates measured by the sup norm between
the numerical solution and the asymptotic solution.
\end{definition}

One of the difficulties of computing the decay orders is that the values $%
C_{j}$ in (\ref{estimates in mkst}) can not be found theoretically. As a
result, we suppose $C_{j}=1$ in Definition \ref{NDR}.

From the results in (Q1), we have $T=\max \left \{ T_{1},...,T_{4}\right \}
=280.$ Then from Table \ref{sup i}, $L_{1},$ $L_{4}$, $A_{2}$ and $A_{3}$
can be found as%
\begin{equation*}
L_{1}=-\frac{\log E_{1}^{\sup }\left( T\right) }{\log T}=\frac{-\log \left(
6.063916391485836\times 10^{-5}\right) }{\log 240}=1.7718,
\end{equation*}%
\begin{equation*}
A_{2}=-\frac{\log E_{2}^{\sup }\left( T\right) }{\log T}=\frac{-\log \left(
3.619659640699480\times 10^{-3}\right) }{\log 280}=0.99762,
\end{equation*}%
\begin{equation*}
A_{3}=-\frac{\log E_{3}^{\sup }\left( T\right) }{\log T}=\frac{-\log \left(
2.513406195082118\times 10^{-3}\right) }{\log 280}=1.0623,
\end{equation*}%
\begin{equation*}
L_{4}=-\frac{\log E_{4}^{\sup }\left( T\right) }{\log T}=\frac{-\log \left(
2.373955192058340\times 10^{-14}\right) }{\log 280}=5.5675.
\end{equation*}

\noindent \textbf{Remark} According to the results, we can see that the
numerical decay powers match the restrictions of the powers of $t$ in
regions $(ii)$ and $(iv)$ from \cite{MKST}. The reason for choosing $%
T_{2}=280$ is therefore explained. We can further compute $A_{2}$ at
different times by%
\begin{equation*}
\left. A_{2}\right \vert _{t=240}=-\frac{\log E_{2}^{\sup }\left( 240\right)
}{\log 240}=\frac{-\log \left( 3.724938756956186\times 10^{-3}\right) }{\log
240}=1.0204.
\end{equation*}%
For the determination of $T$ in the question (Q1), Figure \ref{L24regions}
is not sufficient to answer this question. We need the predicted numerical
decay power since at $t=280$ the corresponding $A_{2}$ can match the result $%
\alpha \in \left( \frac{1}{2},1\right) $ in \cite{MKST}. Moreover, in region
$(iii),$ at $t=400,$ one can find that $A_{3}=\frac{-\log \left(
2.030974658791331\times 10^{-3}\right) }{\log 400}=1.0347.$ This value is
close to the power of $t$ in (\ref{2nd asymp}).

\subsubsection{The dependence of the numerical decay powers on $\protect%
\varepsilon $ \label{eplison influence rate}}

In order to compare the influence of the chosen $\varepsilon $ on $%
E_{j}^{\sup }\left( t\right) ,$ the variances of $E_{j}^{\sup }\left(
t\right) $ ($j=2$, $4$) obtained at different $\varepsilon $ are tabulated
in Table \ref{E2}.

\begin{table}[tbh]
\begin{equation*}
\begin{tabular}{|l|l|l|l|l|}
\hline
& $t$ & $\varepsilon =\frac{9}{80}$ & $\varepsilon =\frac{14}{80}$ & $%
\varepsilon =\frac{19}{80}$ \\ \hline \hline
& $40$ & 1.418562E-1 & 1.404633E-1 & 1.180903E-1 \\ \cline{2-5}
$E_{2}^{\sup }\left( t\right) $ & $80$ & 1.048379E-1 & 8.67541E-2 &
4.211969E-2 \\ \cline{2-5}
& $120$ & 8.665302E-2 & 4.26092E-2 & 4.09646E-2 \\ \cline{2-5}
& $160$ & 7.31637E-2 & 3.96539E-2 & 3.96539E-2 \\ \cline{2-5}
& $200$ & 5.899846E-2 & 3.84018E-2 & 3.84018E-2 \\ \cline{2-5}
& $240$ & 4.469613E-2 & 3.724938E-2 & 3.724938E-2 \\ \cline{2-5}
& $280$ & 3.619659E-2 & 3.61965E-2 & 3.619659E-2 \\ \cline{2-5}
& $320$ & 3.523470E-2 & 3.52347E-2 & 3.52347E-2 \\ \hline \hline
& $40$ & 1.319865E-3 & 1.80288E-4 & 2.801281E-5 \\ \cline{2-5}
$E_{4}^{\sup }\left( t\right) $ & $80$ & 1.783843E-4 & 3.74229E-6 &
6.24902E-8 \\ \cline{2-5}
& $120$ & 1.783843E-5 & 8.745510E-8 & 5.18444E-10 \\ \cline{2-5}
& $160$ & 1.783843E-6 & 1.91130E-9 & 6.47375E-13 \\ \cline{2-5}
& $200$ & 1.783843E-7 & 4.52411E-11 & 5.59605E-15 \\ \cline{2-5}
& $240$ & 1.783843E-8 & 1.04862E-12 & 6.87049E-18 \\ \cline{2-5}
& $280$ & 1.783843E-9 & 2.373955E-14 & 2.463507E-20 \\ \cline{2-5}
& $320$ & 1.783843E-10 & 5.235427E-16 & 8.57598E-24 \\ \hline
\end{tabular}%
\end{equation*}%
\caption{{\protect \small The differences }$E_{2}^{\sup }\left( t\right) $%
{\protect \small \ and }$E_{4}^{\sup }\left( t\right) ${\protect \small \
obtained in the regions }$(ii)${\protect \small \ and }$(iv)${\protect \small %
, respectively at different times for }$\protect \varepsilon =\frac{9}{80},$%
{\protect \small \ }$\protect \varepsilon =\frac{14}{80},${\protect \small \ }$%
\protect \varepsilon =\frac{19}{80}${\protect \small .}}
\label{E2}
\end{table}

\begin{table}[tbh]
\begin{equation*}
\begin{tabular}{|l|l|l|l|}
\hline
$t$ & $A_{2}$ at $\varepsilon =\frac{9}{80}$ & $A_{2}$ at $\varepsilon =%
\frac{14}{80}$ & $A_{2}$ at $\varepsilon =\frac{19}{80}$ \\ \hline
$40$ & $0.52941$ & $0.53209$ & $0.57912$ \\ \hline
$80$ & $0.51468$ & $0.55789$ & $0.72278$ \\ \hline
$120$ & $0.51088$ & $0.65915$ & $0.66737$ \\ \hline
$160$ & $0.51526$ & $0.63595$ & $0.63595$ \\ \hline
$200$ & $0.53418$ & $0.61522$ & $0.61522$ \\ \hline
$240$ & $0.56706$ & $0.60032$ & $0.60032$ \\ \hline
$280$ & $0.58898$ & $0.58898$ & $0.58898$ \\ \hline
$320$ & $0.58002$ & $0.51139$ & $0.58002$ \\ \hline
\end{tabular}%
\end{equation*}%
\caption{{\protect \small The numerical decay rate }$A_{2}${\protect \small \
in region }$(ii)${\protect \small \ computed from the differences tabulated
in Table \protect \ref{E2} at different }$t.$}
\label{decay_rate_ii}
\end{table}

\begin{table}[tbh]
\begin{equation*}
\begin{tabular}{|l|l|l|l|}
\hline
$t$ & $L_{4}$ at $\varepsilon =\frac{9}{80}$ & $L_{4}$ at $\varepsilon =%
\frac{14}{80}$ & $L_{4}$ at $\varepsilon =\frac{19}{80}$ \\ \hline
$40$ & $1.7974$ & $2.337$ & $2.8417$ \\ \hline
$80$ & $1.9698$ & $2.8516$ & $3.7855$ \\ \hline
$120$ & $2.2839$ & $3.3947$ & $4.4658$ \\ \hline
$160$ & $2.6081$ & $3.9556$ & $5.53$ \\ \hline
$200$ & $2.9329$ & $4.4956$ & $6.1938$ \\ \hline
$240$ & $3.2554$ & $5.0329$ & $7.2107$ \\ \hline
$280$ & $3.575$ & $5.5675$ & $8.0127$ \\ \hline
$320$ & $3.8914$ & $6.0999$ & $9.2077$ \\ \hline
\end{tabular}%
\end{equation*}%
\caption{{\protect \small The numerical decay rate }$L_{4}${\protect \small \
in region }$(iv)${\protect \small \ computed from the differences tabulated
in Table \protect \ref{E2} at different }$t.$}
\label{decay_rate_iv}
\end{table}

Moreover, we list in Tables \ref{decay_rate_ii} and \ref{decay_rate_iv}
different values of the numerical decay powers $A_{2}$ and $L_{4}$ predicted
at different $\varepsilon $ in regions $(ii)$ and $(iv)$, respectively. From
these tables, the following conclusions can be drawn:

(1) In a single region, for a smaller $\varepsilon ,$ the difference of the
norms becomes larger. A smaller $\varepsilon $ means a larger region ( i.e.,
a higher precision) for the approximation under consideration. Therefore the
time needed for the asymptotic solution to match the numerical solution is
longer.\smallskip

(2) For $\varepsilon =\frac{9}{80}$, we compare the decay estimates $%
E_{2}^{\sup }\left( t\right) $ and $E_{4}^{\sup }\left( t\right) $ at the
large time, say at $t=320.$ Then it follows that ${\small E}_{2}^{\sup
}\left( 320\right) >{\small E}_{4}^{\sup }\left( 320\right) .$ This
important message sheds light on the decay sup norm difference in the region
$(iv)$, which is smaller than that in the region $(ii).$ This result matches
the theoretical theorem obtained in \cite{MKST}, where the decay power $%
\alpha $ in region $(ii)$ satisfies $\alpha \in \left( \frac{1}{2},1\right) $
provided that the decay power $l$ in region $(iv)$ satisfies $l\geq 5.$ This
means that the decay order in region $(iv)$ is larger than that in region $%
(ii)$.

\section{Concluding remarks \label{sec:concluding}}

In this article we are aimed to get the long-time asymptotics for the CH
equation (\ref{eq:CH}). To this end, a new analytical initial condition (\ref%
{ini15}) of the CH equation is constructed first underlying the scattering
theory. It is worthy to note that this newly derived initial condition has a
non-zero reflection coefficient. In the second part of this study, a new
high-order finite difference scheme is developed and we apply it to solve
the CH solution under the specified initial condition (\ref{ini15}). Unlike
the works of \cite{MKST,bib:Monvel(2010)}, we quantitatively determine the
asymptotic time $T\ $by considering the $L^{2}$ norm differences between the
numerical and the asymptotic solutions. Beyond $T$ the predicted solutions
approach the so called long-time asymptotic solutions in the six distinct
regions (the four regions $(i)$ - $(iv)$ and the two transition regions)
under current investigation. The value of $T$ depends strongly on how large
the $\left( x,t>0\right) $ plane is required to get a good match between the
numerical and asymptotic solutions.

Under the non-reflectionless condition, we have numerically revisited the
oscillatory behaviors in the first and second oscillatory\ regions.
Moreover, we can clearly see that the investigated spatial domain contains
different asymptotic regions. The difference between the numerical and
asymptotic solutions is also intensively examined. Moreover, the rates of
convergence in different solution regions have been obtained as well. The
decay orders of the time-decay estimates between the numerical and
asymptotic solution in \cite{MKST} are computed numerically by considering
the sup norm difference between them. We have also computed the sup norm
differences at different $\varepsilon $, (i.e., at different amounts of
area) in each region. For a larger $\varepsilon ,$ the sup norm difference
becomes smaller and the decay orders numerically computed will be larger.
These numerical works are new to the best of authors' knowledge.

It can be expected that as long as the specified initial solution of the
integrable systems (including the CH equation, which is the example in this
article) can be constructed, the corresponding time evolving solutions can
be found by applying the currently developed numerical methods. Through this
study, not only the long time behavior of the CH solution but also the short
time solution profile predicted from the finite difference method can be
obtained.

\appendix

\section{Appendix}

\subsection{Previous results}

\begin{proposition}
\label{4 regions}(Theorem 2.1 in \cite{MKST}) Let $u\left( x,t\right) $ be
the solution of (\ref{eq:CH}), (\ref{eq:bc1_}). There are four sectors in
the $\left( x,t>0\right) $ half-plane. The leading term of the long-time
asymptotics of $u\left( x,t\right) $ in each sector behaves differently
depending on the magnitude of $\zeta =\frac{x}{t}:$ (see Figure \ref%
{fourregionMKST}).

\noindent (i) The soliton region: $\zeta >2+\varepsilon $ ($\varepsilon >0$
small). Let%
\begin{equation}
\alpha _{j}\left( y,t\right) =\frac{\gamma _{j}^{2}}{2\mu _{j}}e^{-2\mu
_{j}\left( y-c_{j}t\right) },\text{ where }c_{j}=\frac{2}{1-4\mu _{j}^{2}},
\label{1-solitonFOUMULA1}
\end{equation}%
with $j=1,...,N$ for some $N\in
%TCIMACRO{\U{2115} }%
%BeginExpansion
\mathbb{N}
%EndExpansion
.$ Let $\varepsilon >0$ be a small enough magnitude such that $\left[
c_{j}-\varepsilon ,c_{j}+\varepsilon \right] $ and $\left[ c_{j}-\varepsilon
,c_{j}+\varepsilon \right] $ ($i,j=1,...,N,$ $i\neq j$) are disjoint and are
inside of $\left( 2,\infty \right) .$

\noindent (i$_{1}$) If $\left \vert \zeta -c_{j}\right \vert <\varepsilon $
for some $j,$ $u\left( x,t\right) $ behaves as the sum of 1-solitons. That
is,%
\begin{equation*}
u\left( x,t\right) =u_{\text{sol}}^{j}\left( x,t\right) +O\left(
t^{-l}\right) \text{ for any }l>0,
\end{equation*}%
where $u_{\text{sol}}^{j}\left( x,t\right) $ is the 1-soliton and can be
expressed parametrically in the form as follows%
\begin{equation}
\begin{tabular}{l}
$u\left( y,t\right) =\frac{32\mu _{j}^{2}}{\left( 1-4\mu _{j}^{2}\right) ^{2}%
}\frac{\alpha _{j}\left( y,t\right) }{\left( 1+\alpha _{j}\left( y,t\right)
\right) ^{2}+\frac{16\mu _{j}^{2}}{1-4\mu _{j}^{2}}\alpha _{j}\left(
y,t\right) },$ $\  \ x\left( y,t\right) =y+\log \frac{1+\alpha _{j}\left(
y,t\right) \frac{1+2\mu _{j}}{1-2\mu _{j}}}{1+\alpha _{j}\left( y,t\right)
\frac{1-2\mu _{j}}{1+2\mu _{j}}}$.%
\end{tabular}
\label{soliton}
\end{equation}%
In the above, $\mu _{j}$ is one of the discrete eigenvalues with the
corresponding normalization constant $\gamma _{j},$ $j=1,2,...,N$,

\noindent (i$_{2}$) If $\left \vert \zeta -c_{j}\right \vert \geq
\varepsilon $ for all $j,$ $u\left( x,t\right) $ is rapidly decreasing;

\noindent (ii) The \textquotedblleft first oscillatory\textquotedblright \
region: if $0\leq \zeta <2-\varepsilon $ for any $\varepsilon >0,$ $u\left(
x,t\right) $ satisfies (\ref{oscillatory region (ii)}). That is,%
\begin{equation}
\begin{tabular}{l}
$u\left( x,t\right) =-\sqrt{\frac{2k_{0}\left( \zeta \right) \nu _{0}\left(
\zeta \right) }{\left( \frac{1}{4}+k_{0}^{2}\left( \zeta \right) \right)
\left( \frac{3}{4}-k_{0}^{2}\left( \zeta \right) \right) t}}\sin \left(
\frac{2k_{0}^{3}\left( \zeta \right) }{\left( \frac{1}{4}+k_{0}^{2}\left(
\zeta \right) \right) ^{2}}t-\nu _{0}\left( \zeta \right) \log t+\delta
_{0}\left( \zeta \right) \right) +O\left( t^{-\alpha }\right) $%
\end{tabular}
\label{ii asymp form}
\end{equation}%
for any $\alpha \in \left( \frac{1}{2},1\right) $ provided $l\geq 5$. The
notations $k_{0},\nu _{0}$ and $\delta _{0}$ in this formula can be seen as
well in \cite{MKST} and their $q_{0}$-dependence can be seen in Subsection
A.2;

\noindent (iii) The \textquotedblleft second oscillatory\textquotedblright \
region: if $\frac{-1}{4}+\varepsilon <\zeta <0$ for any $\varepsilon >0,$ $%
u\left( x,t\right) $ satisfies (\ref{2nd asymp}). That is,%
\begin{equation}
\begin{tabular}{ll}
$u\left( x,t\right) =$ & $-\sqrt{\frac{2k_{0}\left( \zeta \right) \nu
_{0}\left( \zeta \right) }{\left( \frac{1}{4}+k_{0}^{2}\left( \zeta \right)
\right) \left( \frac{3}{4}-k_{0}^{2}\left( \zeta \right) \right) t}}\sin
\left( \frac{2k_{0}^{3}\left( \zeta \right) }{\left( \frac{1}{4}%
+k_{0}^{2}\left( \zeta \right) \right) ^{2}}t-\nu _{0}\left( \zeta \right)
\log t+\bar{\delta}_{0}\left( \zeta \right) \right) $ \\
& $-\sqrt{\frac{2k_{1}\left( \zeta \right) \nu _{1}\left( \zeta \right) }{%
\left( \frac{1}{4}+k_{1}^{2}\left( \zeta \right) \right) \left(
k_{1}^{2}\left( \zeta \right) -\frac{3}{4}\right) t}}\sin \left( \frac{%
2k_{1}^{3}\left( \zeta \right) }{\left( \frac{1}{4}+k_{1}^{2}\left( \zeta
\right) \right) ^{2}}t+\nu _{1}\left( \zeta \right) \log t-\delta _{1}\left(
\zeta \right) \right) +O\left( t^{-\alpha }\right) $%
\end{tabular}
\label{iii asymp form}
\end{equation}%
for any $\alpha \in \left( \frac{1}{2},1\right) $ provided $l\geq 5$. The
notations $k_{j},\nu _{j}$ ($j=0,1$), $\bar{\delta}_{0}$ and $\delta _{1}$
in this formula can be seen in \cite{MKST}. Their $q_{0}$-dependence can be
also seen in Subsection A.2;

\noindent (iv) The \textquotedblleft fast decay\textquotedblright \ region:
if $\zeta <\frac{-1}{4}-\varepsilon $ for any $\varepsilon >0,$ $u\left(
x,t\right) $ is rapidly decreasing.
\end{proposition}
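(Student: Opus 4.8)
This statement is Theorem~2.1 of \cite{MKST}, so the plan is not to reprove it from scratch but to recall the skeleton of the Riemann--Hilbert / nonlinear steepest descent argument that establishes it; the full asymptotic analysis there is long. First one would convert the spectral data into a $2\times2$ matrix Riemann--Hilbert problem: starting from the Lax pair (\ref{Lax1})--(\ref{Lax2}) and the Liouville change of variables (\ref{eqn5})--(\ref{eqn6}), the direct/inverse problem becomes an RHP for $m=m(k;x,t)$, analytic in $k\in\mathbb{C}\setminus\mathbb{R}$, normalized at $k=\infty$, with a jump across $\mathbb{R}$ whose off-diagonal part carries $R(k)$ times the oscillatory factor $e^{2it\theta(k,\zeta)}$ and with residue conditions at the discrete spectrum $k=i\mu_{j}$ fixed by the normalization constants $\gamma_{j}$. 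Here $\zeta=x/t$ is frozen, and from the time evolution $R(k,t)=R(k)e^{-ikt/(1/4+k^{2})}$, $\gamma_{j}(t)=\gamma_{j}e^{\mu_{j}t/2(1/4-\mu_{j}^{2})}$ recorded in Section~\ref{sec:equation} the controlling phase is, up to normalization,
\begin{equation*}
\theta(k,\zeta)=k\zeta-\frac{k}{2\left(\tfrac14+k^{2}\right)},\qquad
\theta'(k,\zeta)=\zeta-\frac{\tfrac14-k^{2}}{2\left(\tfrac14+k^{2}\right)^{2}}.
\end{equation*}
The solution $u$, in the parametric $(y,t)$ form, would be read off from the $k\to\infty$ and $k\to0$ expansions of $m$, and (\ref{eqn6}) then inverted to return to the variable $x$.

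Next one analyses the signature table, i.e.\ the real critical points $\theta'(k,\zeta)=0$, which solve $\zeta=\bigl(\tfrac14-k^{2}\bigr)/\bigl(2(\tfrac14+k^{2})^{2}\bigr)$. The right-hand side equals $2$ at $k=0$, decreases through $0$ at $k=\tfrac12$, attains its minimum $-\tfrac14$ at $k=\tfrac{\sqrt3}{2}$, and returns to $0$ as $k\to\infty$. Hence there is no real stationary point for $\zeta>2$, one symmetric pair $\pm k_{0}(\zeta)$ for $0\le\zeta<2$, two pairs $\pm k_{0}(\zeta),\pm k_{1}(\zeta)$ for $-\tfrac14<\zeta<0$, and none again for $\zeta<-\tfrac14$ --- precisely the partition into the sectors (i)--(iv), which simultaneously fixes the sign of $\Re\,(2it\theta)$ needed in the deformations.

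Then, in each sector one carries out the usual Deift--Zhou steps: a triangular factorization of the jump, conjugation by a scalar $g$- or $\delta$-function to tame the growth produced by $R$, opening of lenses along steepest-descent contours, and replacement of the local behaviour near the relevant critical point or pole by an exactly solvable model. For $\zeta>2$ the jump on $\mathbb{R}$ becomes exponentially small and is deformed away, leaving a rank-$N$ residue problem whose solution is the $N$-soliton; isolating the $j$-th pole when $|\zeta-c_{j}|<\varepsilon$ yields the parametric $1$-soliton (\ref{soliton})--(\ref{1-solitonFOUMULA1}) with error $O(t^{-l})$ for every $l$, and rapid decay otherwise. In the oscillatory sectors the local parametrix at each $\pm k_{j}$ is built from parabolic-cylinder functions: one model produces the single damped oscillation (\ref{ii asymp form}) with the $t^{-1/2}$ prefactor and the $\log t$ phase correction, two models produce the sum (\ref{iii asymp form}), with remainder $O(t^{-\alpha})$ for any $\alpha\in(\tfrac12,1)$ once the scattering data decays sufficiently, which in the terms of \cite{MKST} is the requirement $l\ge5$ (automatic for a rational reflection coefficient such as $R(k)=-q_{0}/(q_{0}+2ik)$). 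For $\zeta<-\tfrac14$ the jump is again exponentially small and $u$ is rapidly decreasing.

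The hard part, executed at length in \cite{MKST}, will be the quantitative error control: bounding the small-norm residual Riemann--Hilbert problem left after all local parametrices are subtracted, tracking the $\zeta$-dependence of the constants uniformly away from the critical slopes $\zeta=2,-\tfrac14$, and coping with the coordinate singularity of $\theta$ at $k=0$, which together pin down the admissible exponent $\alpha$. The behaviour exactly at $\zeta=2$ and $\zeta=-\tfrac14$ is not covered by this statement; it is the Painlev\'{e} regime of \cite{bib:Monvel(2010)}. For the present work all of this is taken as given, and what remains is only to substitute the explicit scattering data (\ref{scatteringdata}) into the formulas above to exhibit their $q_{0}$-dependence, as recorded in the Appendix.
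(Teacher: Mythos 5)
Your proposal is correct and matches the paper's treatment: the paper gives no proof of this proposition at all, quoting it verbatim as Theorem 2.1 of \cite{MKST} (with the transition regions deferred to \cite{bib:Monvel(2010)}), and your sketch of the Riemann--Hilbert / Deift--Zhou argument behind it is accurate --- the phase $\theta(k;\zeta)=\zeta k-\tfrac{2k}{1+4k^{2}}$, the stationary-point count giving the four sectors bounded by $\zeta=2$ and $\zeta=-\tfrac14$, the residue (soliton) problem for $\zeta>2$, and the parabolic-cylinder parametrices producing the $t^{-1/2}$ oscillatory terms with remainder $O(t^{-\alpha})$, $\alpha\in(\tfrac12,1)$, under the moment condition $l\ge 5$. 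Since both you and the paper ultimately take the result as given from \cite{MKST} and only substitute the explicit scattering data (\ref{scatteringdata}), no further verification is required.
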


\subsection{Newly derived long-time asymptotics}

\subsubsection{Region $(i)$}

Substituting the derived scattering data $R\left( k\right) $ or (\ref%
{scatteringdata}) (i.e., $\mu _{j}=\mu _{1},$ $\gamma _{j}=\gamma _{1}$)
into (\ref{soliton}) and (\ref{1-solitonFOUMULA1}), the 1-soliton type of
solution is expressed as%
\begin{equation}
\left \{
\begin{tabular}{l}
$u\left( y,t\right) =\frac{4q_{0}^{2}}{\left( 1-q_{0}^{2}\right) ^{2}}\frac{1%
}{\exp \left( q_{0}\left( y-\frac{2}{1-q_{0}^{2}}t\right) \right) +\frac{1}{4%
}\exp \left( -q_{0}\left( y-\frac{2}{1-q_{0}^{2}}t\right) \right) +\frac{%
1+q_{0}^{2}}{1-q_{0}^{2}}},\medskip $ \\
$x\left( y,t\right) =y+\log \frac{1+\exp \left( -q_{0}\left( y-\frac{2}{%
1-q_{0}^{2}}t+\frac{1}{q_{0}}\log 2\right) \right) \frac{1+q_{0}}{1-q_{0}}}{%
1+\exp \left( -q_{0}\left( y-\frac{2}{1-q_{0}^{2}}t+\frac{1}{q_{0}}\log
2\right) \right) \frac{1-q_{0}}{1+q_{0}}}.$%
\end{tabular}%
\right.  \label{soliton wrt q}
\end{equation}

\subsubsection{Region $(ii)$}

By Proposition \ref{4 regions}, for $0\leq \zeta =\frac{x}{t}<2-\varepsilon $
the solution $u\left( x,t\right) $ behaves like (\ref{ii asymp form}), where
$k_{0}\left( \zeta \right) =\frac{1}{2}\sqrt{-\frac{1+\zeta -\sqrt{1+4\zeta }%
}{\zeta }},$ $\nu _{0}\left( \zeta \right) =-\frac{1}{2\pi }\log \left(
1-\left \vert R\left( k_{0}\left( \zeta \right) \right) \right \vert
^{2}\right) .$ Note that $\delta _{0}\left( \zeta \right) $ is expressed in
(2.13) of \cite{MKST}. Subject to the initial solution (\ref{ini15}) and the
corresponding scattering data (\ref{scatteringdata}), the asymptotic form (%
\ref{oscillatory region (ii)}) depends on the parameter $q_{0}$ in the sense
of%
\begin{equation*}
\begin{tabular}{l}
$\nu _{0}\left( \zeta \right) =-\frac{1}{2\pi }\log \frac{\sqrt{1+4\zeta }%
-1-\zeta }{\zeta q_{0}^{2}+\sqrt{1+4\zeta }-1-\zeta },$%
\end{tabular}%
\end{equation*}%
and%
\begin{equation}
\begin{tabular}{ll}
$\delta _{0}\left( \zeta \right) $ & $=\frac{\pi }{4}-\left( \tan
^{-1}\left( \frac{-2k_{0}\left( \zeta \right) }{q_{0}}\right) +\pi \right)
-\nu _{0}\left( \zeta \right) \log \frac{32\zeta \left( \sqrt{1+4\zeta }%
-1-\zeta \right) \left( 1+4\zeta -\sqrt{1+4\zeta }\right) }{\left( \sqrt{%
1+4\zeta }-1\right) ^{3}}\smallskip $ \\
& $+4\tan ^{-1}\left( \frac{q_{0}}{2k_{0}\left( \zeta \right) }\right)
+4k_{0}\left( \zeta \right) \log \frac{1+q_{0}}{1-q_{0}}+\frac{4k_{0}\left(
\zeta \right) }{\pi }\int_{-k_{0}\left( \zeta \right) }^{k_{0}\left( \zeta
\right) }\frac{1}{1+4\xi ^{2}}\log \frac{4\xi ^{2}}{q_{0}^{2}+4\xi ^{2}}d\xi
\smallskip $ \\
& $+\frac{1}{\pi }\int_{-k_{0}\left( \zeta \right) }^{k_{0}\left( \zeta
\right) }\frac{2q_{0}^{2}}{\left( q_{0}^{2}+4\xi ^{2}\right) \xi }\log
\left( k_{0}\left( \zeta \right) -\xi \right) d\xi +\arg \Gamma \left( i\nu
_{0}\left( \zeta \right) \right) .$%
\end{tabular}
\label{delta}
\end{equation}%
By using the formula for the Gamma function $\Gamma $ \cite{handbook}%
\begin{equation}
\begin{tabular}{l}
$\arg \Gamma \left( z+1\right) =\arg \Gamma \left( z\right) +\tan ^{-1}\frac{%
y}{x},$ $z=x+iy,\smallskip $ \\
$\arg \Gamma \left( x+iy\right) =y\frac{\Gamma ^{\prime }\left( x\right) }{%
\Gamma \left( x\right) }+\sum_{n=0}^{\infty }\left( \frac{y}{x+n}-\tan ^{-1}%
\frac{y}{x+n}\right) ,$ $x+iy\neq 0,-1,-2,...$%
\end{tabular}
\label{Gamma function formula}
\end{equation}%
the last term $\arg \Gamma \left( i\nu _{0}\left( \zeta \right) \right) $ in
(\ref{delta}) can be expressed as%
\begin{equation*}
\begin{tabular}{ll}
$\arg \Gamma \left( i\nu _{0}\left( \zeta \right) \right) $ & $=\arg \Gamma
\left( 1+i\nu _{0}\left( \zeta \right) \right) -\frac{\pi }{2}$ \\
& $=\nu _{0}\frac{\Gamma ^{\prime }\left( 1\right) }{\Gamma \left( 1\right) }%
+\sum_{n=0}^{\infty }\left( \frac{\nu _{0}}{1+n}-\tan ^{-1}\frac{\nu _{0}}{%
1+n}\right) -\frac{\pi }{2}$ \\
& $=\left( -\gamma \nu _{0}\left( \zeta \right) +\sum_{n=0}^{\infty }\left(
\frac{\nu _{0}\left( \zeta \right) }{1+n}-\tan ^{-1}\frac{\nu _{0}\left(
\zeta \right) }{1+n}\right) \right) -\frac{\pi }{2}$%
\end{tabular}%
\end{equation*}%
where $\gamma =\lim_{n\rightarrow \infty }\left( \sum_{k=1}^{n}\frac{1}{k}%
-\log n\right) =0.5772156649....$ is the Euler constant.

\subsubsection{Region $(iii)$}

By Proposition \ref{4 regions}, for $\frac{-1}{4}+\varepsilon \leq \zeta =%
\frac{x}{t}<0,$ $u\left( x,t\right) $ takes the form like (\ref{iii asymp
form}), where%
\begin{equation*}
\begin{tabular}{ll}
$k_{0}\left( \zeta \right) =\frac{1}{2}\sqrt{-\frac{1+\zeta -\sqrt{1+4\zeta }%
}{\zeta }},$ & $k_{1}\left( \zeta \right) =\frac{1}{2}\sqrt{-\frac{1+\zeta +%
\sqrt{1+4\zeta }}{\zeta }},$%
\end{tabular}%
\end{equation*}%
\begin{equation*}
\begin{tabular}{l}
$\nu _{l}\left( \zeta \right) =-\frac{1}{2\pi }\log \left( 1-\left \vert
R\left( k_{l}\left( \zeta \right) \right) \right \vert ^{2}\right) ,$ $%
l=0,1. $%
\end{tabular}%
\end{equation*}%
Note that $\bar{\delta}_{0}\left( \zeta \right) $ and $\delta _{1}\left(
\zeta \right) $ are expressed in (2.17) of \cite{MKST}. Similar to the
slowly decaying modulated oscillation region $(ii)$, through the initial
data (\ref{ini15}) together with the scattering data (\ref{scatteringdata})
the asymptotic form (\ref{2nd asymp}) can be rewritten in terms of the
parameter $q_{0}$ as
\begin{equation*}
\begin{tabular}{ll}
$\nu _{0}\left( \zeta \right) =-\frac{1}{2\pi }\log \frac{\sqrt{1+4\zeta }%
-1-\zeta }{\zeta q_{0}^{2}+\sqrt{1+4\zeta }-1-\zeta },$ & $\nu _{1}\left(
\zeta \right) =-\frac{1}{2\pi }\log \frac{-\sqrt{1+4\zeta }-1-\zeta }{\zeta
q_{0}^{2}-\sqrt{1+4\zeta }-1-\zeta }.$%
\end{tabular}%
\end{equation*}%
Also, $\bar{\delta}_{0}\left( \zeta \right) ,$ $\delta _{1}\left( \zeta
\right) $ can be rewritten as%
\begin{equation*}
\begin{tabular}{ll}
$\bar{\delta}_{0}\left( \zeta \right) $ & $=\frac{\pi }{4}-\left( \tan
^{-1}\left( \frac{-2k_{0}\left( \zeta \right) }{q_{0}}\right) +\pi \right)
+\arg \Gamma \left( i\nu _{0}\left( \zeta \right) \right) $ \\
& $-\nu _{0}\left( \zeta \right) \log \frac{32\zeta \left( \sqrt{1+4\zeta }%
-1-\zeta \right) \left( 1+4\zeta -\sqrt{1+4\zeta }\right) }{\left( \sqrt{%
1+4\zeta }-1\right) ^{3}}$ \\
& $+4\tan ^{-1}\left( \frac{q_{0}}{2k_{0}\left( \zeta \right) }\right)
+4k_{0}\left( \zeta \right) \log \frac{1+q_{0}}{1-q_{0}}\smallskip $ \\
& $+\frac{4k_{0}\left( \zeta \right) }{\pi }\left( \int_{-\infty
}^{-k_{1}\left( \zeta \right) }+\int_{-k_{0}\left( \zeta \right)
}^{k_{0}\left( \zeta \right) }+\int_{k_{1}\left( \zeta \right) }^{\infty
}\right) \frac{1}{1+4\xi ^{2}}\log \frac{4\xi ^{2}}{q_{0}^{2}+4\xi ^{2}}d\xi
\smallskip $ \\
& $+\frac{1}{\pi }I_{0}+2\nu _{1}\left( \zeta \right) \log \frac{k_{1}\left(
\zeta \right) -k_{0}\left( \zeta \right) }{k_{1}\left( \zeta \right)
+k_{0}\left( \zeta \right) }$%
\end{tabular}%
\end{equation*}%
with%
\begin{equation*}
\begin{tabular}{ll}
$I_{0}=$ & $\left( \int_{-\infty }^{-k_{1}\left( \zeta \right)
}+\int_{-k_{0}\left( \zeta \right) }^{k_{0}\left( \zeta \right) }\right)
\log \left( k_{0}\left( \zeta \right) -\xi \right) \frac{2q_{0}^{2}}{\left(
q_{0}^{2}+4\xi ^{2}\right) \xi }d\xi $ \\
& $+\int_{k_{1}\left( \zeta \right) }^{\infty }\log \left( \xi -k_{0}\left(
\zeta \right) \right) \frac{2q_{0}^{2}}{\left( q_{0}^{2}+4\xi ^{2}\right)
\xi }d\xi ,$%
\end{tabular}%
\end{equation*}%
\begin{equation*}
\begin{tabular}{ll}
$\delta _{1}\left( \zeta \right) $ & $=\frac{\pi }{4}+\tan ^{-1}\left( \frac{%
-2k_{1}\left( \zeta \right) }{q_{0}}\right) +\pi +\arg \Gamma \left( i\nu
_{1}\left( \zeta \right) \right) $ \\
& $-\nu _{1}\left( \zeta \right) \log \frac{32\zeta \left( \sqrt{1+4\zeta }%
+1+\zeta \right) \left( 1+4\zeta +\sqrt{1+4\zeta }\right) }{-\left( \sqrt{%
1+4\zeta }+1\right) ^{3}}$ \\
& $-4\tan ^{-1}\left( \frac{q_{0}}{2k_{1}\left( \zeta \right) }\right)
+4k_{1}\left( \zeta \right) \log \frac{1+q_{0}}{1-q_{0}}\smallskip $ \\
& $+\frac{4k_{1}\left( \zeta \right) }{\pi }\left( \int_{-\infty
}^{-k_{1}\left( \zeta \right) }+\int_{-k_{0}\left( \zeta \right)
}^{k_{0}\left( \zeta \right) }+\int_{k_{1}\left( \zeta \right) }^{\infty
}\right) \frac{1}{1+4\xi ^{2}}\log \frac{4\xi ^{2}}{q_{0}^{2}+4\xi ^{2}}d\xi
\smallskip $ \\
& $-\frac{1}{\pi }I_{1}-2\nu _{0}\left( \zeta \right) \log \frac{k_{1}\left(
\zeta \right) -k_{0}\left( \zeta \right) }{k_{1}\left( \zeta \right)
+k_{0}\left( \zeta \right) }.$%
\end{tabular}%
\end{equation*}%
In the above, $I_{1}$ and $\arg \Gamma \left( i\nu _{1}\left( \zeta \right)
\right) $ are expressed as follows%
\begin{equation*}
\begin{tabular}{ll}
$I_{1}=$ & $\left( \int_{-\infty }^{-k_{1}\left( \zeta \right)
}+\int_{-k_{0}\left( \zeta \right) }^{k_{0}\left( \zeta \right) }\right)
\log \left( k_{1}\left( \zeta \right) -\xi \right) \frac{2q_{0}^{2}}{\left(
q_{0}^{2}+4\xi ^{2}\right) \xi }d\xi $ \\
& $+\int_{k_{1}\left( \zeta \right) }^{\infty }\log \left( \xi -k_{1}\left(
\zeta \right) \right) \frac{2q_{0}^{2}}{\left( q_{0}^{2}+4\xi ^{2}\right)
\xi }d\xi ,$%
\end{tabular}%
\end{equation*}%
\begin{equation*}
\begin{tabular}{l}
$\arg \Gamma \left( i\nu _{1}\left( \zeta \right) \right) =-\gamma \nu
_{1}\left( \zeta \right) +\sum_{n=0}^{\infty }\left( \frac{\nu _{1}\left(
\zeta \right) }{1+n}-\tan ^{-1}\frac{\nu _{1}\left( \zeta \right) }{1+n}%
\right) -\frac{\pi }{2}.$%
\end{tabular}%
\end{equation*}

\subsection{Transition regions}

According to \cite{MKST} and \cite{bib:Monvel(2010)}, the Riemann-Hilbert
Problem (RHP) of the CH equation is known to have a unique solution $M\left(
k;y,t\right) $ sought subject to the jump condition $M_{+}\left(
k;y,t\right) =M_{-}\left( k;y,t\right) J\left( k;y,t\right) $ for $k\in
%TCIMACRO{\U{211d} }%
%BeginExpansion
\mathbb{R}
%EndExpansion
,$ where%
\begin{equation*}
J\left( k;y,t\right) =e^{-it\theta \left( k;\hat{\zeta}\right) \sigma
_{3}}\left(
\begin{array}{cc}
1-\left \vert R\left( k\right) \right \vert ^{2} & -\overline{R\left(
k\right) } \\
R\left( k\right) & 1%
\end{array}%
\right) e^{it\theta \left( k;\hat{\zeta}\right) \sigma _{3}}
\end{equation*}%
with%
\begin{equation*}
\theta (k;\hat{\zeta})=\hat{\zeta}k-\frac{2k}{1+4k^{2}},\text{ \ }\sigma
_{3}=\left(
\begin{array}{cc}
1 & 0 \\
0 & 1%
\end{array}%
\right) ,\text{ }\hat{\zeta}=\frac{y}{t}
\end{equation*}

Following the method of Deift and Zhou \cite{DeiftZhou}, Boutet de Monvel et
al. analyzed the long-time asymptotics of the RHP according to the sign of $%
\theta (k;\hat{\zeta})$ in different regions in the spectral domain $k$ such
that the limit of $\theta (k;\hat{\zeta})$ can be observed as $t\rightarrow
\infty .$ There are four cases (see Figure 4.1 in \cite{MKST}): $\hat{\zeta}%
>2,$ $0<\hat{\zeta}<2,$ $-\frac{1}{4}<\hat{\zeta}<0$ and $\hat{\zeta}<\frac{%
-1}{4}$ under investigation. In the regions $0<\hat{\zeta}<2-\varepsilon $
and $-\frac{1}{4}+\varepsilon <\hat{\zeta}<0$ for any fixed $\varepsilon >0$%
, the stationary phase points (the points $k$ at which $\frac{d\theta \left(
k;\hat{\zeta}\right) }{dk}=0$), namely,%
\begin{equation*}
\begin{tabular}{ll}
$\pm k_{0}(\hat{\zeta})=\pm \frac{1}{2}\sqrt{-\frac{1+\hat{\zeta}-\sqrt{1+4%
\hat{\zeta}}}{\hat{\zeta}}},$ & $\pm k_{1}(\hat{\zeta})=\pm \frac{1}{2}\sqrt{%
-\frac{1+\hat{\zeta}+\sqrt{1+4\hat{\zeta}}}{\hat{\zeta}}},$%
\end{tabular}%
\end{equation*}%
are well separated. Also, the two oscillatory asymptotics can be obtained by
considering the long-time asymptotics in a small neighborhood of $\pm k_{0}(%
\hat{\zeta})$ or $\pm k_{1}(\hat{\zeta})$ in the spectral domain $k.$

As long as $\hat{\zeta}\rightarrow 2$ and $\hat{\zeta}\rightarrow -\frac{1}{4%
}$ as $t\rightarrow \infty ,$ $k_{0}(\hat{\zeta})\ $and $-k_{0}(\hat{\zeta}%
)\ $approach $0$ and $\pm k_{0}(\hat{\zeta}),$ $\pm k_{1}(\hat{\zeta}%
)\rightarrow \pm \frac{\sqrt{3}}{2},$ respectively. We need therefore a more
detailed analysis of the asymptotics. Consider, for example, the first
transition region. If $\hat{\zeta}\rightarrow 2$, it follows that%
\begin{equation*}
\theta (k;\hat{\zeta})=\frac{4}{3}\hat{k}^{3}+s\hat{k}+O\left( \hat{k}^{5}t^{%
\frac{-2}{3}}\right)
\end{equation*}%
where $\hat{k}=\left( 6t\right) ^{\frac{1}{3}}k,$ $s=6^{\frac{-1}{3}}(\hat{%
\zeta}-2)t^{\frac{2}{3}}.$ For the variable $s$ in a bounded region, say $%
\left \vert \hat{\zeta}-2\right \vert t^{\frac{2}{3}}<C$ for any $C>0,$ the
RHP for the CH equation can be asymptotically approximated by the RHP of the
Painlev\'{e} equation of type II (see \cite{FIKN}).

\begin{proposition}
\label{Painleve region}(Theorem 1.1 in \cite{bib:Monvel(2010)}) Let $u\left(
x,t\right) $ be the solution of (\ref{eq:CH}), (\ref{eq:bc1_}).

(a) For $\left \vert \frac{x}{t}-2\right \vert t^{\frac{2}{3}}<C,$ $u\left(
x,t\right) $ satisfies (\ref{Painleve 1st reg}) where $s=6^{\frac{-1}{3}%
}\left( \frac{x}{t}-2\right) t^{\frac{2}{3}},$ and $v=v\left( s\right)
=v\left( s;-R\left( 0\right) \right) $ is the real valued, non-singular
solution of the $P_{II}$ equation (\ref{P2}) fixed by $v\left( s\right) \sim
-R\left( 0\right) $Ai$\left( s\right) $ $s\rightarrow \infty .$

(b) For $\left \vert \frac{x}{t}+\frac{1}{4}\right \vert t^{\frac{2}{3}}<C,$
$u\left( x,t\right) $ satisfies (\ref{Painleve 2nd reg}) where%
\begin{equation*}
s_{1}=-\left( \frac{16}{3}\right) ^{\frac{1}{3}}\left( \frac{x}{t}+\frac{1}{4%
}\right) t^{\frac{2}{3}},\text{ }\psi \left( s_{1},t\right) =\frac{-3\sqrt{3}%
}{4}t-\frac{3^{\frac{5}{6}}}{2^{\frac{4}{3}}}s_{1}t^{\frac{1}{3}}+\triangle
_{1},
\end{equation*}%
with $\triangle _{1}$ being expressed as (1.6) in \cite{bib:Monvel(2010)}.
Here, $v_{1}\left( s\right) =v_{1}\left( s;\left \vert R\left( \frac{\sqrt{3}%
}{2}\right) \right \vert \right) $ is the real valued, non-singular solution
of (\ref{P2}) fixed by $v_{1}\left( s\right) \sim \left \vert R\left( \frac{%
\sqrt{3}}{2}\right) \right \vert $Ai$\left( s\right) $ as $s\rightarrow
\infty .$
\end{proposition}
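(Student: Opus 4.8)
The plan is to run the Deift--Zhou nonlinear steepest descent analysis \cite{DeiftZhou} of the Riemann--Hilbert problem for $M(k;y,t)$ recalled above, treating each transition region as a confluent-stationary-point problem whose local parametrix is the isomonodromy Riemann--Hilbert problem for Painlev\'e~II \cite{FIKN}. All the contour deformations are carried out in the $(y,t)$ variables with $\hat\zeta=y/t$, and one returns to the physical $(x,t)$ coordinates only at the end through the Liouville change of variables (\ref{eqn6}), which on both transition strips differs from the identity by terms of lower order than the leading asymptotics; thus the leading behavior of $u$ is obtained from the expansion $M(k;y,t)=I+M_1(y,t)/k+O(k^{-2})$ as $k\to\infty$ together with the standard formula reconstructing $u$ from $M_1$.

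For part~(a): as $\hat\zeta\to2$ the two real stationary points $\pm k_0(\hat\zeta)$ of $\theta(\cdot;\hat\zeta)$ merge at $k=0$, and in the scaled variable $\hat k=(6t)^{1/3}k$ one has $t\theta(k;\hat\zeta)=\tfrac43\hat k^{3}+s\hat k+O(\hat k^{5}t^{-2/3})$ with $s=6^{-1/3}(\hat\zeta-2)t^{2/3}$ bounded, since $|\hat\zeta-2|t^{2/3}<C$. I would first carry out the usual triangular factorizations of the jump matrix and open lenses along the steepest-descent rays issuing from $k=0$, so that outside a shrinking neighborhood $D$ of the origin the deformed jump is $I+O(e^{-ct})$; the single discrete eigenvalue contributes only exponentially small corrections here, since its trajectory $\hat\zeta=2/(1-4\mu_1^{2})$ lies strictly inside the soliton region. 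On $D$ one freezes the reflection coefficient at its value $R(0)$ and replaces $t\theta$ by its cubic part, obtaining the model Riemann--Hilbert problem for $P_{II}$ \cite{FIKN}, whose solution is the Hastings--McLeod transcendent $v(s)=v(s;-R(0))$ with $v(s)\sim-R(0)\mathrm{Ai}(s)$ as $s\to+\infty$. Matching this parametrix to the trivial outer solution on $\partial D$ yields a small-norm error problem, solvable by Neumann series, whose correction to $M_1$ is $O(t^{-1})$; controlling the cubic-phase remainder and the contour tails uses the $(1+|x|)^{1+l}$ decay of the initial data and hence of $R(k)$. Feeding the large-$\hat k$ expansion of the $P_{II}$ parametrix --- whose coefficients are built from $v(s)$ and $v'(s)$ --- into the reconstruction formula produces $u(x,t)=-(4/3)^{2/3}t^{-2/3}\bigl(v^{2}(s)-v'(s)\bigr)+O(t^{-1})$, i.e.\ (\ref{Painleve 1st reg}).

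For part~(b): as $\hat\zeta\to-\tfrac14$ the four stationary points $\pm k_0(\hat\zeta),\pm k_1(\hat\zeta)$ coalesce pairwise at $k=\pm\tfrac{\sqrt3}{2}$, so the same scheme applies but with two local $P_{II}$ parametrices, one near each of $\pm\tfrac{\sqrt3}{2}$, with the reflection coefficient frozen at $R(\pm\tfrac{\sqrt3}{2})$, of modulus $|R(\tfrac{\sqrt3}{2})|<1$. The symmetry $k\mapsto-k$ of the Riemann--Hilbert problem makes the two local contributions complex conjugate to one another, which is what turns the pair of Painlev\'e factors into a single real amplitude $v_1(s_1)=v_1(s_1;|R(\tfrac{\sqrt3}{2})|)$ times the oscillatory factor $\sin\psi(s_1,t)$, with $s_1=-(16/3)^{1/3}(\hat\zeta+\tfrac14)t^{2/3}$ and the rapid phase $\psi$ produced by the non-vanishing of $\theta$ at $\pm\tfrac{\sqrt3}{2}$. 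That non-vanishing is also responsible for the slower rate $t^{-1/3}$ and the larger error $O(t^{-2/3})$ in (\ref{Painleve 2nd reg}), in contrast to the rate $t^{-2/3}$ and error $O(t^{-1})$ in~(a). The uniqueness of $v_1$ and its behavior (\ref{Clarkson Mcleod formula}) as $s\to-\infty$ are the Clarkson--McLeod results \cite{ClarksonMcleod} already quoted, and they in addition guarantee that (\ref{Painleve 2nd reg}) matches continuously onto the second-oscillatory-region formula (\ref{iii asymp form}) as $|\hat\zeta+\tfrac14|t^{2/3}\to\infty$.

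The main obstacle is the construction and, above all, the \emph{uniform} control of these local Painlev\'e parametrices: one must show that replacing $t\theta$ by its cubic Taylor polynomial on $D$ costs only $O(t^{-1})$ in~(a) (respectively $O(t^{-2/3})$ in~(b)), uniformly over the whole transition strip while the similarity variable $s$ (respectively $s_1$) runs over a bounded interval, and that the model $P_{II}$ Riemann--Hilbert problem is solvable for every real value of that variable --- equivalently, that the relevant Painlev\'e transcendent is globally real and non-singular. This is precisely where the borderline Hastings--McLeod value $R(0)=-1$ \cite{HasMcl1980} in~(a) and the Ablowitz--Segur range $|R(\tfrac{\sqrt3}{2})|<1$ \cite{ClarksonMcleod} in~(b) enter in an essential way, together with the decay and regularity hypothesis (\ref{restriction2}) on the initial momentum, which is what legitimizes all the contour deformations and delivers the stated error orders. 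Once these two points are secured the remainder is bookkeeping: assembling the error Riemann--Hilbert problem, inverting it, and reading off the coefficient of $1/k$.
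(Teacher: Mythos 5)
Your proposal is essentially the same argument the paper points to: the paper does not prove this proposition itself but quotes it as Theorem~1.1 of \cite{bib:Monvel(2010)}, and its Appendix A.3 sketches precisely the reduction you describe --- the rescaling $\hat{k}=(6t)^{1/3}k$, $s=6^{-1/3}(\hat{\zeta}-2)t^{2/3}$ turning $t\theta$ into the cubic Painlev\'{e}~II phase, with the local model Riemann--Hilbert problem taken from \cite{FIKN}. The uniform small-norm control of the local parametrices and the pairwise coalescence at $\pm\frac{\sqrt{3}}{2}$ that you flag as the main obstacles are exactly what the paper delegates to the cited reference, so your outline matches the intended proof route.
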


\subsubsection{Second transition region}

By considering the specified initial solution (\ref{ini15}) with the
scattering data (\ref{scatteringdata}), the asymptotic form (\ref{Painleve
2nd reg}) can be rewritten in a form that depends on the parameter $q_{0}$.
That is, $\triangle _{1}=\triangle _{1}\left( q_{0}\right) $ in the
following form:%
\begin{equation*}
\begin{tabular}{ll}
$\triangle _{1}$ & $=\frac{-4\sqrt{3}}{\pi }\int_{0}^{\infty }\frac{1}{%
1+4\xi ^{2}}\log \frac{4\xi ^{2}}{q_{0}^{2}+4\xi ^{2}}d\xi +\tan ^{-1}\left(
\frac{-\sqrt{3}}{q_{0}}\right) +\pi -4\tan ^{-1}\left( \frac{q_{0}}{\sqrt{3}}%
\right) \smallskip $ \\
& $-2\sqrt{3}\log \frac{1+q_{0}}{1-q_{0}}+\frac{1}{\pi }\int_{-\infty
}^{\infty }\frac{1}{\xi -\frac{\sqrt{3}}{2}}\log \frac{4\xi ^{2}}{%
q_{0}^{2}+4\xi ^{2}}d\xi .$%
\end{tabular}%
\end{equation*}

In the following we concentrate on the connection formula (\ref{Clarkson
Mcleod formula}) of (\ref{P2}). By \cite{ClarksonMcleod}, we can have%
\begin{equation*}
\begin{tabular}{l}
$d^{2}=\frac{-1}{\pi }\ln \left( 1-\left \vert R\left( \frac{\sqrt{3}}{2}%
\right) \right \vert ^{2}\right) ,\smallskip $ \\
$\theta _{0}=\frac{3}{2}d^{2}\ln 2+\arg \left[ \Gamma \left( 1-\frac{1}{2}%
d^{2}i\right) \right] -\frac{1}{4}\pi .$%
\end{tabular}%
\end{equation*}%
Note that $\left \vert R\left( \frac{\sqrt{3}}{2}\right) \right \vert =\frac{%
q_{0}}{\sqrt{q_{0}^{2}+3}},$ $\left \vert R\left( \frac{\sqrt{3}}{2}\right)
\right \vert $ is equal to $\frac{1}{\sqrt{13}}$ if $q_{0}=\frac{1}{2}.$ By
the formulae (\ref{Gamma function formula}), it follows that%
\begin{equation*}
\arg \left[ \Gamma \left( 1-\frac{1}{2}d^{2}i\right) \right] =\frac{1}{2}%
d^{2}\gamma +\sum_{n=0}^{\infty }\left( \frac{\left( -\frac{1}{2}%
d^{2}\right) }{1+n}-\tan ^{-1}\frac{\left( -\frac{1}{2}d^{2}\right) }{1+n}%
\right) .
\end{equation*}%
Hence, the following results can be obtained%
\begin{equation*}
\begin{tabular}{l}
$d^{2}=\frac{-1}{\pi }\ln \left( 1-r^{2}\right) =\frac{-1}{\pi }\ln \frac{12%
}{13}=\frac{1}{\pi }\ln \frac{13}{12},$ \\
$\theta _{0}=\frac{3}{2}d^{2}\ln 2+\arg \left[ \Gamma \left( 1-\frac{1}{2}%
d^{2}i\right) \right] -\frac{1}{4}\pi $ \\
$=\left( \frac{1}{\pi }\ln \frac{13}{12}\right) \left( \frac{3}{2}\ln 2+%
\frac{1}{2}\gamma \right) +\sum_{n=0}^{\infty }\left( \frac{\left( -\frac{1}{%
2}\left( \frac{1}{\pi }\ln \frac{13}{12}\right) \right) }{1+n}-\tan ^{-1}%
\frac{\left( -\frac{1}{2}\left( \frac{1}{\pi }\ln \frac{13}{12}\right)
\right) }{1+n}\right) -\frac{1}{4}\pi .$%
\end{tabular}%
\end{equation*}%
The connection formula for $v\left( s\right) $ can be therefore expressed as

\begin{eqnarray}
v\left( s\right) &\sim &\frac{1}{\sqrt{13}}\frac{1}{2\sqrt{\pi }}s^{\frac{-1%
}{4}}e^{\frac{-2}{3}s^{\frac{-3}{2}}}:=B_{+}\left( s\right) ,\text{ \ }%
s\rightarrow \infty ,  \label{connection formula} \\
v\left( s\right) &\sim &\sqrt{\frac{1}{\pi }\ln \frac{13}{12}}\left \vert
s\right \vert ^{\frac{-1}{4}}\sin \left \{ \frac{2}{3}\left \vert s\right
\vert ^{\frac{3}{2}}-\frac{3}{4}\left( \frac{1}{\pi }\ln \frac{13}{12}%
\right) \ln \left \vert s\right \vert -\theta _{0}\right \} :=B_{-}\left(
s\right) ,\text{ \ }s\rightarrow -\infty .  \notag
\end{eqnarray}

\section*{Acknowledgements}

%%%%%%%%%%%%%%%%%%%%%%%%%%%%%%%%%%%%%%%%%%%%%%%%%%%%%%%%%%%%%%%%%%%%%%%%%%%%%%%%%%%%%%%%%%%
This research is supported by the Ministry of Science and Technology (MOST)
under the Grants NSC 98-2628-M-002-006 and 98-2811-E-002-006. The last
author would like to thank Prof. Chang-Shou Lin for his long-term
encouragement of conducting this study and the fruitful discussion in the
past two years.

\end{document}